\newtheorem{theorem}{Theorem}
\newtheorem{corollary}{Corollary}
\newtheorem{lemma}{Lemma}
\newtheorem{definition}{Definition}
\newtheorem{example}{Example}
\newtheorem{conjecture}{Conjecture}
\begin{document}

\title{Classes of Complete Simple Games that are All Weighted}

\author{Sascha Kurz$^\star$}                                                                                                                                                                
\address{$^\star$Department of Mathematics, University of Bayreuth, 95440 Bayreuth, Germany, sascha.kurz@uni-bayreuth.de}
\author{Nikolas Tautenhahn$^\dagger$}
\address{$^\dagger$LivingLogic AG, Markgrafenallee 44, 95448 Bayreuth, Germany}
\begin{abstract}
  Important decisions are likely made by groups of agents. Thus group decision making
is very common in practice. Very transparent group aggregating rules are given by weighted
voting, where each agent is assigned a weight. Here a proposal is accepted if the sum of the weights
of the supporting agents meets or exceeds a given quota. We study a more general class of binary voting
systems -- complete simple games -- and propose an algorithm to determine which sub classes,
parameterized by the agent's type composition, are weighted.
     
  \medskip
     
  \noindent
  \textbf{Keywords:} Complete Simple Games, Weighted Games, Voting, Group Decision Making.
  
  \medskip
  
  \noindent
  \textbf{Copyright:} The present paper will be presented and published in a slightly different form at
  ICORES 2014, http://www.icores.org.  
\end{abstract}
\maketitle

\section{\uppercase{Introduction}}
\label{sec:introduction}

\noindent
Weighted voting is a method for group decision making. For simplicity, 
we assume that for each proposal on a certain issue the group members,
called agents for brevity,  options are either to vote {\lq\lq}yes{\rq\rq}
or {\lq\lq}no{\rq\rq}. The aggregated group decision then is also either 
{\lq\lq}yes{\rq\rq} or {\lq\lq}no{\rq\rq}. Those procedures are called
binary voting systems or games in the literature. The special case of a 
weighted game consists of a quota $q>0$ and weights $w_i\ge 0$ for
every participating  agent. With this, the aggregated group decision is 
{\lq\lq}yes{\rq\rq} if and only if the summed weights of the supporters
of a given proposal meets or exceeds the quota. For $n$ agents, such a game
is denoted by $[q;w_1,\dots,w_n]$.
   
Weighted voting systems are commonly applied whenever not all agents are
considered to be equal. Reasons may lie in heterogeneous competencies for
different issues, see e.g.\ \cite{grofman1983thirteen}. In stock corporations,
weights can arise as the number of shares that each shareholder owns, see
e.g.\ \cite{leech2013shareholder}. In two-tier voting systems like the US
Electoral College or the EU Council of Ministers, agents vote as a block
or represent countries with different population sizes, which then have to be
mapped to appropriate weights, see e.g.\ \cite{maaser2007equal}.   
  
Weighted games form a very concrete, compact, and well-studied sub class
of group decision rules. Nevertheless, some practical decision rules of
legislative bodies do not correspond to weighted games, like e.g.\ the present
rule of the EU Council of Ministers. In Section~\ref{sec:complete_games}, we
introduce and motivate the important super class of complete simple games.
So some complete simple games are weighted and others are not.

In principle, complete simple games (or simple games) can be very complicated.
Restricting to the subclass of symmetric (complete) simple games, where all
agents have equal capabilities, simplifies things dramatically, as first 
found out in \cite{may1952set}: All such games\footnote{More precisely,
May's Theorem applies to (complete) simple games with one type of agents 
(see Section~\ref{sec:complete_games}).} are weighted, 
i.e., have a relatively simple structure.

In this paper we aim to generalize May's Theorem by providing a strategy 
to classify all classes of complete simple games, according to the agent's 
type composition (see Definition~\ref{def_type_composition}),
with the property that every class member is weighted. It will turn out that
this can happen only if at most five different types of agents are present and
from all but one type there have to be very few agents, see lemmas 
\ref{lemma_many_types} and \ref{lemma_not_all_weighted}.

Exact formulas for the number of sub classes of weighted games are rather rare.
From May's Theorem, one can conclude that the number of weighted games with $n$
agents all of the same type\footnote{They all have the form 
$[q;1,\dots,1]$ with $q\in\{1,\dots,n\}$.} is given by $n$. In \cite{kurz2013dedekind},
the authors have presented an algorithm that can compute an exact enumeration 
formula for complete simple games with $t$ types of agents and $r$, so-called,
shift-minimal winning vectors depending on the number of agents $n$. No such algorithm
is known for weighted games. Having our classification result at hand, we can enumerate
the corresponding sub classes of weighted games since it will turn out, that in each case 
the number of occurring shift-minimal winning vectors is bounded by a small integer
and the restrictions from the agent's type composition can be easily incorporated  into
the enumeration algorithm. 

\section{\uppercase{Complete simple games}}
\label{sec:complete_games}

\noindent
A binary voting procedure can be modeled as a function $v:2^N\rightarrow\{0,1\}$ mapping
the coalition $S$ of supporting agents to the aggregated group decision $v(S)$, where
$N=\{1,\dots,n\}$ and $2^N$ denotes the set of subsets of $N$. Quite naturally, several
assumptions of a binary voting procedure are taken for granted: 
\begin{enumerate}
  \item[(1)] if no agent's supports the proposal, reject it;
  \item[(2)] if all agent's supports the proposal, accept it;
  \item[(3)] if the supporting clique of agents is enlarged by some additional agents, the
             group decision should not change from acceptance to rejection.
\end{enumerate}
More formally, we state:
\begin{definition}
  A pair $(v,N)$ is called simple game if $N$ is a finite set, $v:2^N\rightarrow\{0,1\}$
  satisfies $v(\emptyset)=0$, $v(N)=1$, and $v(S)\le v(T)$ for all $S\subseteq T\subseteq N$.
\end{definition}

A more demanding assumption is to require that the agents are linearly ordered according to
their capabilities to influence the final group decision. This can be formalized
with the desirability relation introduced in \cite{isbell1956class}.

\begin{definition}
  Let $(v,N)$ be a simple game. We write $i\sqsupset j$ (or $j \sqsubset i$) for two agents
  $i,j\in N$ if we have $v\Big(\{i\}\cup S\backslash\{j\}\Big)\ge v(S)$ for all 
  $\{j\}\subseteq S\subseteq N\backslash\{i\}$ and we abbreviate $i\sqsupset j$,
  $j\sqsupset i$ by $i\square j$.
\end{definition}

The relation $\square$ partitions the set of agents $N$ into equivalence classes $N_1,\dots,N_t$.
\begin{example}
  \label{ex_weighted_game}
  For the weighted game $[4;5,4,2,2,0]$ we have $N_1=\{1,2\}$, $N_2=\{3,4\}$, and $N_3=\{5\}$.
\end{example}
Agents having the same weight are contained in the same equivalence class, while the converse is
not necessarily true. But there always exists a different weighted representation of the same
game such that the agents of each equivalence class have the same weight. For 
Example~\ref{ex_weighted_game}, such a representation is e.g.\ given by $[2;2,2,1,1,0]$. 

\begin{definition}
  \label{def_complete_simple_game}
  A simple game $(v,N)$ is called complete if the binary relation $\sqsupset$ is a total
  preorder, i.e.,
  \begin{itemize}
    \item[(1)] $i\sqsupset i$ for all $i\in N$,
    \item[(2)] $i\sqsupset j$ or $j\sqsupset i$ for all $i,j\in N$, and
    \item[(3)] $i\sqsupset j$, $j\sqsupset h$ implies $i\sqsupset h$ for all $i,j,h\in N$.
  \end{itemize}
\end{definition}

All weighted games are obviously simple and complete.

\begin{definition}
  For a simple game $(v,N)$ a coalition $S\subseteq N$ is called winning if $v(S)=1$ and
  losing otherwise. If $v(S)=1$, $v(T)=0$ for all $T\subsetneq S$, then $S$ is called
  minimal winning. A coalition with $v(S)=0$, $v(T)=1$ for all
  $S\subsetneq T\subseteq N$ is called maximal losing.
\end{definition}

In Example~\ref{ex_weighted_game}, coalition $\{2,3\}$ is winning, $\{2\}$ is minimal
winning, $\{3\}$ is losing, and $\{3,5\}$ is maximal losing.

\begin{definition}
  \label{def_type_composition}
  For a complete simple game $(v,N)$, the vector $(n_1,\dots,n_t)\in\mathbb{N}_{>0}^t$, where
  $n_i=\left|N_i\right|$, is called type composition. The number $t$ of equivalence
  classes is called number of types (of agents).
\end{definition}

The type composition of Example~\ref{ex_weighted_game} is given by
$\begin{pmatrix}2,2,1\end{pmatrix}$ consisting of three types. Agents within the
same equivalence class are interchangeable, i.e., since coalition $\{2,3\}$ is
winning also the coalitions $\{1,3\}$, $\{1,4\}$, and $\{2,4\}$ have to be winning.
The combinatorial explosion of the set of corresponding winning coalitions can
be partially captured by:
\begin{definition}
  \label{def_winning_and_losing_vector}
  Let $(v,N)$ be a complete simple game with type composition $(n_1,\dots,n_t)$.
  Each vector $s=(s_1,\dots,s_t)\in\mathbb{N}^t$ with $0\le s_i\le n_i$ for all
  $1\le i\le t$ is called coalition vector of $(v,N)$. Coalition vector $s$
  is winning if we have $v(S)=1$ for coalitions $S\subseteq N$ with $\left|S\cap N_i\right|=s_i$
  for all $1\le i\le t$, and losing otherwise. 
\end{definition}  

The just mentioned four winning coalitions can be condensed to the winning vector $(1,1,0)$.

\begin{definition}
  For two vectors $a=(a_1,\dots,a_t)\in \mathbb{N}^t$ and $b=(b_1,\dots,b_t)\in\mathbb{N}^t$
  we write $a\le b$ if $a_i\le b_i$ for all $1\le i\le t$.
\end{definition}

If $a$ is a winning vector of a complete simple game and $a\le b$, then $b$ is winning too.
Next, we define a tightening of the concept of minimal winning and maximal losing coalitions
for coalition vectors. To this end we have to assume $1\sqsupset 2\sqsupset\dots\sqsupset n$
in the following.

\begin{definition}
  For two vectors $a=(a_1,\dots,a_t)\in \mathbb{N}^t$ and $b=(b_1,\dots,b_t)\in\mathbb{N}^t$
  we write $a\preceq b$ if $\sum_{j=1}^i a_j\le \sum_{j=1}^i b_j$ for all $1\le i\le t$.
  Vector $a$ is called shift-minimal winning (SMW), if $a$ is winning and all $b\preceq a$, with $b\neq a$, are losing. 
  Similarly, vector $a$ is called shift-maximal losing (SML), if $a$ is losing and all $b\succeq a$, with $b\neq a$, 
  are winning.
\end{definition}

An example is given by $(0,1,0)\preceq(1,0,0)$. The SMW vectors of 
Example~\ref{ex_weighted_game} are given by $(1,0,0)$ and $(0,2,0)$. The unique SML 
vector is given by $(0,1,1)$. We remark that each complete simple game is uniquely
characterized by its type composition and its full list of SMW vectors.
Of course, not every collection of coalition vectors for a given type composition is a feasible
set of SMW vectors.

\begin{definition}
  Let $a=(a_1,\dots,a_t)\in \mathbb{N}^t$ and $b=(b_1,\dots,b_t)\in\mathbb{N}^t$ be two vectors.
  We write $a\bowtie b$ if neither $a\preceq b$ nor $a\succeq b$, i.e., when they are incomparable.
  Mimicking the lexicographic order, we write $a\gtrdot b$ if there exists an index $k\in \{0,\dots,n-1\}$
  such that $a_j=b_j$ for all $1\le j\le k$ and $a_{j+1}>b_{j+1}$.
\end{definition} 

A parameterization theorem for complete simple games with $t$ types of agents has been given
in \cite{carreras1996complete}:
\begin{theorem}
  \label{thm_characterization_cs}

  \vspace*{0mm}

  \noindent
  \begin{itemize}
   \item[(a)] Let vector $\widehat{n}=(n_1,\dots,n_t)\in\mathbb{N}_{>0}^t$ and a matrix
              $$\mathcal{S}=\begin{pmatrix}s_{1,1}&s_{1,2}&\dots&s_{1,t}\\s_{2,1}&s_{2,2}&\dots&s_{2,t}\\
              \vdots&\ddots&\ddots&\vdots\\s_{r,1}&s_{r,2}&\dots&s_{r,t}\end{pmatrix}=
              \begin{pmatrix}\widehat{s}_1\\\widehat{s}_2\\\vdots\\\widehat{s}_r\end{pmatrix}$$
              satisfy the following properties:
              \begin{itemize}
               \item[(i)]   $0\le s_{i,j}\le n_j$, $s_{i,j}\in\mathbb{N}$ for $1\le i\le r$, $1\le j\le t$,
               \item[(ii)]  $\widehat{s}_i\bowtie\widehat{s}_j$ for all $1\le i<j\le r$,
               \item[(iii)] for each $1\le j<t$ there is at least one row-index $i$ such that
                            $s_{i,j}>0$, $s_{i,j+1}<n_{j+1}$ if $t>1$ and $s_{1,1}>0$ if $t=1$, and
               \item[(iv)]  $\widehat{s}_i\gtrdot \widehat{s}_{i+1}$ for $1\le i<r$.
              \end{itemize}
              Then, there exists a complete simple game $(v,N)$ associated to
              $\left(\widehat{n},\mathcal{S}\right)$.
   \item[(b)] Two complete simple games $\left(\widehat{n}_1,\mathcal{S}_1\right)$ and
              $\left(\widehat{n}_2,\mathcal{S}_2\right)$ are isomorphic if and only if 
              $\widehat{n}_1=\widehat{n}_2$ and $\mathcal{S}_1=\mathcal{S}_2$.
  \end{itemize}
\end{theorem}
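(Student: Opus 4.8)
The plan is to prove (a) by an explicit construction and (b) by an invariance argument. For part (a), given $\left(\widehat{n},\mathcal{S}\right)$ satisfying (i)--(iv), I would define the game directly at the level of coalition vectors: declare a coalition vector $a=(a_1,\dots,a_t)$ to be winning precisely when $a\succeq\widehat{s}_i$ for at least one row index $i$, and losing otherwise. Since this rule depends only on the type profile, it lifts to a well-defined function $v:2^N\to\{0,1\}$ on a set $N$ partitioned into blocks of sizes $n_1,\dots,n_t$ with agents labelled so that $1\sqsupset 2\sqsupset\dots\sqsupset n$. First I would check that $(v,N)$ is a simple game: monotonicity follows because $a\le b$ implies $a\preceq b$, so transitivity of $\preceq$ propagates a dominance $a\succeq\widehat{s}_i$ upward; $v(\widehat{n})=1$ holds because (i) gives $\widehat{s}_i\le\widehat{n}$, hence $\widehat{s}_i\preceq\widehat{n}$; and $v(\emptyset)=0$ because $0\succeq\widehat{s}_i$ would force a zero row, which properties (ii) and (iii) jointly exclude (a zero row is $\preceq$-below every vector, contradicting the pairwise incomparability of (ii) as soon as $r\ge 2$, while a single zero row is ruled out by (iii)).

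Next I would establish completeness, i.e.\ that the desirability preorder is total with equivalence classes exactly $N_1,\dots,N_t$ and $N_1\sqsupset\dots\sqsupset N_t$. Interchangeability within a block is immediate, since $v$ only sees type profiles. For $k<k'$, moving one unit of membership from the weaker type $k'$ to the stronger type $k$ raises a vector in the $\preceq$ order, so it can never turn a winning profile into a losing one; this yields $p\sqsupset q$ for $p\in N_k$, $q\in N_{k'}$ and shows the preorder is total. The more delicate point — and the one I expect to be the main obstacle — is showing that consecutive types do not collapse, i.e.\ that desirability is \emph{strict} between $N_k$ and $N_{k+1}$. Here property (iii) is exactly the tool: it supplies a row $\widehat{s}_i$ with $s_{i,k}>0$ and $s_{i,k+1}<n_{k+1}$, so $\widehat{s}_i$ can be realized by a winning coalition $T$ containing some $p\in N_k$ with room for a fresh $q\in N_{k+1}$; swapping $p$ out for $q$ produces the strictly $\preceq$-smaller profile, which is losing by shift-minimality, witnessing $q\not\sqsupset p$. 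Running this over all $1\le k<t$ forces the equivalence classes to be precisely the prescribed blocks, so the type composition is realized exactly.

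It then remains to identify the shift-minimal winning vectors of the constructed game with the rows of $\mathcal{S}$. I would show each $\widehat{s}_i$ is shift-minimal winning: any winning $b\preceq\widehat{s}_i$ with $b\neq\widehat{s}_i$ would satisfy $\widehat{s}_{i'}\preceq b\preceq\widehat{s}_i$ for some $i'$, forcing $\widehat{s}_{i'}\preceq\widehat{s}_i$ and hence $i'=i$ by the incomparability (ii), whence $b=\widehat{s}_i$, a contradiction. Conversely any shift-minimal winning vector dominates some $\widehat{s}_i$, which is itself winning, so minimality forces equality. Thus the SMW set is exactly $\left\{\widehat{s}_1,\dots,\widehat{s}_r\right\}$, and the ordering condition (iv) merely fixes the canonical row order. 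This proves the existence claim in (a).

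For part (b), the ``if'' direction is the easy one: when $\widehat{n}_1=\widehat{n}_2$ and $\mathcal{S}_1=\mathcal{S}_2$, a block-preserving bijection sending the $n_k$ agents of type $k$ in the first game to those of the second preserves type profiles, and hence winning coalitions, giving an isomorphism. For the ``only if'' direction I would use that any isomorphism preserves the desirability relation (applying it inside the defining inequality for $\sqsupset$), hence carries equivalence classes to equivalence classes and respects the total preorder; since the types are canonically ordered by decreasing desirability, type $k$ must map to type $k$, forcing equal block sizes and so $\widehat{n}_1=\widehat{n}_2$. The preserved winning profiles then make the two shift-minimal winning sets coincide, and the fixed $\gtrdot$-ordering from (iv) upgrades this to $\mathcal{S}_1=\mathcal{S}_2$.
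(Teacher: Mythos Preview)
The paper does not actually prove this theorem; it is quoted from \cite{carreras1996complete} and only accompanied by an informal gloss on the meaning of conditions (i)--(iv). So there is no in-paper proof to compare against.

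Your argument is essentially the standard one and is correct. Two small remarks. First, when you say the swapped profile $\widehat{s}_i-e_k+e_{k+1}$ is ``losing by shift-minimality,'' you are forward-referencing a fact you only prove in the next paragraph; it is cleaner to note directly that if this vector were winning, some $\widehat{s}_{i'}$ would satisfy $\widehat{s}_{i'}\preceq\widehat{s}_i-e_k+e_{k+1}\prec\widehat{s}_i$, contradicting (ii). Second, for $v(\emptyset)=0$ in the case $r=1$ and $t>1$, your appeal to (iii) works because the $j=1$ clause already forces $s_{1,1}>0$; you might make that explicit rather than leaving it implicit in ``a single zero row is ruled out by (iii).'' Neither point is a genuine gap.
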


Besides being rather technical, there is some easy interpretation for the stated conditions. 
Condition~(i) simply states that the $\widehat{s}_i$ are feasible with respect to the type composition
$\widehat{n}$. If we would not have $\widehat{s}_i\bowtie\widehat{s}_j$, then either 
$\widehat{s}_i\preceq\widehat{s}_j$ or $\widehat{s}_i\succeq\widehat{s}_j$, so that one of both
vectors can not be shift-minimal. Condition~(iii) is necessary to enforce equivalence classes
according to $\widehat{n}$ and condition~(iv) prevents from row permutations. We call two complete
simple games isomorphic if there exists a bijection for the respective agent's names preserving
winning and losing coalitions.

\begin{definition}
  A simple game $(v,N)$ is called weighted if and only if there exist weights $w_i\in\mathbb{R}_{\ge 0}$, 
  for all $i\in N$, and a quota $q\in\mathbb{R}_{>0}$ such that $v(S)=1$ is equivalent
  to $\sum_{i\in S} w_i\ge q$ for all $S\subseteq N$. 
\end{definition}

\section{\uppercase{(Non-) Weightedness}}
\label{sec:weightedness}

\noindent
We have mentioned in the introduction that some complete simple games are weighted while others
are not. In this section, we want to provide a method to decide which case occurs\footnote{Several
algorithms to decide whether a given simple game is weighted or not are known in the literature,
see e.g.\ \cite{0943.91005} for an overview.}.
\begin{example}
  \label{ex_csg}
  Let $\widehat{n}=(2,4)$ and $\mathcal{S}=\begin{pmatrix}2&0\\0&4\end{pmatrix}$, 
  then the complete simple game $\left(\widehat{n},\mathcal{S}\right)$ is not weighted.
\end{example}

Similar to the matrix $\mathcal{S}$ of the shift-minimal vectors, one can write down
a matrix $\mathcal{L}$ of the shift-maximal losing vectors. If running time is not an 
issue, this can be easily done algorithmically:

\begin{small}
\begin{verbatim}
 For each coalition vector a=(a_1,...a_t)
    determine whether a is winning or losing
 End
 For each losing vector a=(a_1,...a_t)
   ok=True
   For i from 1 to t
       If a_i<n_i and a+e_i is losing
       Then ok=False
   End
   If ok==True Then output a
 End
\end{verbatim}
\end{small}

Here $e_i$ denotes the $i$th unit vector and we have $\mathcal{L}=\begin{pmatrix}1&2\end{pmatrix}$
in Example~\ref{ex_csg}.

\begin{lemma}
  \label{lemma_non_weighted_certificate}
  For a complete simple game $(v,N)$ let $\tilde{S}$ be a matrix of (some) winning vectors
  and $\tilde{L}$ be a matrix of (some) losing vectors. If there exist (row) vectors  $x,y$ with
  non-negative real entries, $\Vert x\Vert_1=\Vert y\Vert_1>0$ and $x\tilde{S}\le y\tilde{L}$,
  then $(v,N)$ can not be weighted.
\end{lemma}
\begin{proof}
  Combining the fact that the weight of each winning coalition is larger than the weight of each
  losing coalitions with the weighting of coalitions induced by $x$ and $y$ gives a contradiction.
\end{proof}

A well known fact from the literature is the inverse statement, i.e., for each non-weighted (complete)
simple game there exists a set of winning coalitions (or winning vectors) and a set of losing coalitions
(or losing vectors) with multipliers $x,y$ certifying non-weightedness. The underlying concepts are
trading transforms, see \cite{0943.91005}, or dual multipliers in the theory of linear programming.
An example of a non-weighted complete simple game can be extended to other type compositions:

\begin{lemma}
  \label{lemma_non_weighted_extension}
  Let $G_1=(v,N)$ be a complete simple game with type composition $\widehat{n}=(n_1,\dots,n_t)$ and
  $\tilde{S}$ a matrix of winning vectors, $\tilde{L}$ a matrix of losing vectors, and $x,y$ be
  vectors according to Lemma~\ref{lemma_non_weighted_certificate}, which certify non-weightedness
  of $(v,N)$. For each vector $\widehat{m}\ge \widehat{n}$ there exists a non-weighted complete simple game
  $G_2=(v',N')$ with type composition $\widehat{m}$.
\end{lemma}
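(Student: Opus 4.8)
The plan is to keep the combinatorial description of the game unchanged and merely reinterpret it over the larger type composition. Since a complete simple game is characterized by its type composition together with its full list of SMW vectors, let $\mathcal{S}$ be the SMW matrix of $G_1$ as provided by Theorem~\ref{thm_characterization_cs}, and define $G_2=(v',N')$ to be the complete simple game associated to the pair $\left(\widehat{m},\mathcal{S}\right)$. The guiding observation is that the certificate $(\tilde{S},\tilde{L},x,y)$ refers only to coalition vectors bounded by $\widehat{n}$, and such vectors will retain their winning/losing status when the game is enlarged, so the same certificate should continue to work.

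First I would verify that $\left(\widehat{m},\mathcal{S}\right)$ indeed defines a complete simple game, i.e.\ that conditions (i)--(iv) of Theorem~\ref{thm_characterization_cs} remain valid after replacing $\widehat{n}$ by $\widehat{m}\ge\widehat{n}$. Condition~(i) holds because $s_{i,j}\le n_j\le m_j$. Conditions~(ii) and~(iv) compare the rows of $\mathcal{S}$ only with each other and are therefore untouched by the change of type composition. Condition~(iii) is preserved as well: in the case $t=1$ it reads $s_{1,1}>0$, which is unchanged, while for $t>1$ any row witnessing $s_{i,j}>0$ and $s_{i,j+1}<n_{j+1}$ also satisfies $s_{i,j+1}<n_{j+1}\le m_{j+1}$. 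Hence $G_2$ is a well-defined complete simple game of type composition $\widehat{m}$, and its SMW vectors are precisely the rows of $\mathcal{S}$.

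Next I would argue that the winning and losing status is preserved on the common domain. The relation $\preceq$ is defined purely through prefix sums and does not depend on the type composition, and a coalition vector is winning exactly when it dominates some SMW row in $\succeq$. Consequently, for every coalition vector $s$ with $0\le s\le\widehat{n}$ (which is a valid coalition vector of both games), $s$ is winning in $G_2$ if and only if it is winning in $G_1$. In particular every row of $\tilde{S}$ remains a winning vector of $G_2$ and every row of $\tilde{L}$ remains a losing vector of $G_2$.

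Therefore the unchanged vectors $x,y$ together with $\tilde{S},\tilde{L}$ still satisfy $\Vert x\Vert_1=\Vert y\Vert_1>0$ and $x\tilde{S}\le y\tilde{L}$, so Lemma~\ref{lemma_non_weighted_certificate} applies verbatim and shows that $G_2$ cannot be weighted. I expect the only step requiring genuine care to be the verification of condition~(iii) of Theorem~\ref{thm_characterization_cs}, since it is the single condition coupling $\mathcal{S}$ to the actual sizes $n_{j+1}$; the remainder is simply the observation that nothing in the non-weightedness certificate depends on the type sizes beyond the bounds $s\le\widehat{n}$.
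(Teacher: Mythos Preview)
Your proof is correct and follows essentially the same approach as the paper: build $G_2$ as the minimal extension of $G_1$ (declaring winning precisely those vectors that $\succeq$-dominate some SMW vector of $G_1$) and then reuse the certificate $(\tilde{S},\tilde{L},x,y)$ verbatim via Lemma~\ref{lemma_non_weighted_certificate}. The only difference is in presentation: the paper describes $v'$ directly at the coalition level and says ``we can easily check'' that $G_2$ is a complete simple game with type composition $\widehat{m}$, whereas you obtain the same $G_2$ by feeding $(\widehat{m},\mathcal{S})$ into Theorem~\ref{thm_characterization_cs} and explicitly verifying conditions (i)--(iv), which is cleaner; the paper additionally remarks that the undetermined vectors leave a degree of freedom (yielding further non-weighted extensions), but this extra observation is not needed for the lemma itself.
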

\begin{proof}
  We choose $N'$ such that $N\subseteq N'$ and set $v'(S)=v(S)$ for all $S\subseteq N$, i.e., winning
  vectors of $G_1$ are also winning in $G_2$ and losing vectors of $G_1$ are also losing in $G_2$. All
  coalition vectors of $G_2$ that are comparable to the already assigned vectors, i.e., to the coalition
  vectors of $G_1$, are accordingly set to be either winning or losing. For the remaining vectors
  we have some freedom, but for simplicity determine them to be losing vectors. We can easily
  check that $G_2$ is completely characterized and is indeed a complete simple game. Since
  the rows of $\tilde{S}$ are also winning vectors in $G_2$ and the rows of $\mathcal{L}$ are also losing
  vectors in $G_2$, we can apply Lemma~\ref{lemma_non_weighted_certificate} with the original
  vectors $x,y$ to deduce that $G_2$ is non-weighted. 
\end{proof}

As an example, let $(v,N)$ be uniquely characterized by $\widehat{n}=(3,4)$ and 
$\mathcal{S}=\begin{pmatrix}2&2\end{pmatrix}$. The matrix of shift-maximal losing vectors
is given by $\mathcal{L}=\begin{pmatrix}3&0\\1&4\end{pmatrix}$ and we have $x=(2)$, $y=(1,1)$
as a certificate for non-weightedness. For $\widehat{m}=(6,6)$ the construction of 
Lemma~\ref{lemma_non_weighted_extension} gives the game with type composition
$\widehat{m}$ and $\mathcal{S}=\begin{pmatrix}2&2\end{pmatrix}$. Now the matrix of shift-maximal
losing vectors is given by 
$\mathcal{L}=\begin{pmatrix}3&1&0\\0&4&6\end{pmatrix}^T$.
From the
reused vectors $x$, $y$ we can conclude the non-weightedness of the larger complete simple game.
The degree of freedom in the proof of Lemma~\ref{lemma_non_weighted_extension} allows
us to also conclude that the complete game given by type composition $\widehat{m}$ and
$\mathcal{S}=\begin{pmatrix}2&2\\0&6\end{pmatrix}$ is also non-weighted. Here we have 
$\mathcal{L}=\begin{pmatrix}3&0\\1&4\end{pmatrix}$. The common parts $\tilde{S}$ and
$\tilde{L}$ have to be chosen accordingly.

\begin{definition}
  \label{def_weighted_composition_type}
  We call a type composition $\widehat{n}=(n_1,\dots,n_t)$ weighted if all complete simple games,
  given by a matrix $\mathcal{S}$ of its SMW vectors and $\widehat{n}$, are weighted.
  Otherwise we call $\widehat{n}$ non-weighted. 
\end{definition}

Since $(3,4)\ge (2,4)$ we do not learn anything new, i.e., 
Lemma~\ref{lemma_non_weighted_extension} alone is sufficient to prove:
\begin{lemma}
  \label{lemma_not_all_weighted_special}
  Each type composition $\widehat{n}=(n_1,n_2)$ with $n_1\ge 2$ and $n_2\ge 4$ is non-weighted 
\end{lemma}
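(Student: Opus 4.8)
The plan is to reduce the entire family of type compositions in the statement to the single non-weighted instance already exhibited in Example~\ref{ex_csg}, and then propagate non-weightedness upward by means of the extension lemma. First I would pin down the base game on $\widehat{n}=(2,4)$ with $\mathcal{S}=\begin{pmatrix}2&0\\0&4\end{pmatrix}$ and manufacture an explicit non-weightedness certificate in the sense of Lemma~\ref{lemma_non_weighted_certificate}. Taking the two SMW vectors $(2,0)$ and $(0,4)$ as the rows of $\tilde{S}$ and the shift-maximal losing vector $(1,2)$ as the single row of $\tilde{L}$, the multipliers $x=(1,1)$ and $y=(2)$ satisfy $\Vert x\Vert_1=\Vert y\Vert_1=2>0$ and $x\tilde{S}=(2,4)=y\tilde{L}$, so in particular $x\tilde{S}\le y\tilde{L}$. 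By Lemma~\ref{lemma_non_weighted_certificate} this game is therefore not weighted, which re-establishes that $(2,4)$ is a non-weighted type composition.

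Next I would observe that every type composition allowed by the hypothesis dominates this base case componentwise: for $n_1\ge 2$ and $n_2\ge 4$ we have $\widehat{n}=(n_1,n_2)\ge(2,4)$. Applying Lemma~\ref{lemma_non_weighted_extension} to the base game together with its certificate $(\tilde{S},\tilde{L},x,y)$ and the target vector $\widehat{m}=(n_1,n_2)$ then produces a complete simple game with type composition $(n_1,n_2)$ whose construction reuses the same embedded rows of $\tilde{S}$ and $\tilde{L}$ and the same multipliers, and which is consequently non-weighted. Since Definition~\ref{def_weighted_composition_type} declares a type composition non-weighted as soon as one of its associated complete simple games fails to be weighted, this single witness already suffices to conclude that $(n_1,n_2)$ is non-weighted, and the lemma follows.

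I expect essentially no obstacle here, as the extension lemma carries out the real work and this statement is precisely the observation that, because $(3,4)\ge(2,4)$ (and more generally $(n_1,n_2)\ge(2,4)$), Lemma~\ref{lemma_non_weighted_extension} is sufficient on its own. The only step demanding genuine care is the base certificate: one must verify that $(1,2)$ is truly losing for $\mathcal{S}=\begin{pmatrix}2&0\\0&4\end{pmatrix}$, i.e.\ that neither $(1,2)\succeq(2,0)$ nor $(1,2)\succeq(0,4)$ holds, and that the chosen multipliers respect the norm constraint $\Vert x\Vert_1=\Vert y\Vert_1$ while delivering $x\tilde{S}\le y\tilde{L}$. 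The remaining subtlety — how the fixed rows of $\tilde{S}$ and $\tilde{L}$ embed as winning and losing vectors of the enlarged game — is exactly the degree of freedom already handled inside Lemma~\ref{lemma_non_weighted_extension}, so no additional argument is required.
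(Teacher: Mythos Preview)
Your proposal is correct and follows exactly the argument the paper intends: the paper states, just before the lemma, that ``Lemma~\ref{lemma_non_weighted_extension} alone is sufficient to prove'' it, relying on the non-weighted base instance of Example~\ref{ex_csg} with $\widehat{n}=(2,4)$ and its shift-maximal losing vector $\mathcal{L}=(1,2)$. Your only addition is to spell out the certificate $x=(1,1)$, $y=(2)$ and the verification $x\tilde{S}=(2,4)=y\tilde{L}$, which the paper leaves implicit.
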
 

\begin{lemma}
  \label{lemma_many_types}
  Each type composition $\widehat{n}=(n_1,\dots,n_t)\in\mathbb{N}_{>0}^t$ with
  $t\ge 6$ is non-weighted. 
\end{lemma}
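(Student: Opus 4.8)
\emph{Proof plan.} The plan is to reduce the statement to the single ``smallest'' composition in each dimension and then to exhibit one explicit non-weighted witness. By Definition~\ref{def_weighted_composition_type}, $\widehat{n}$ is non-weighted as soon as \emph{one} complete simple game with type composition $\widehat{n}$ fails to be weighted, so I only need an existence result. Since every composition $(n_1,\dots,n_t)\in\mathbb{N}_{>0}^t$ satisfies $(n_1,\dots,n_t)\ge(1,\dots,1)$, Lemma~\ref{lemma_non_weighted_extension} shows that it suffices to prove, for each fixed $t\ge 6$, that the all-ones composition $(1,\dots,1)$ with $t$ entries is non-weighted; the general case follows by extension. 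Equivalently, for every $t\ge 6$ I must produce one complete simple game on $t$ pairwise distinct types that is not weighted.

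First I would settle the base case $t=6$. Here I would write down an explicit matrix $\mathcal{S}$ of shift-minimal winning vectors for the composition $(1,1,1,1,1,1)$, check conditions (i)--(iv) of Theorem~\ref{thm_characterization_cs}, and in particular verify condition~(iii) for every $1\le j\le 5$ --- this is exactly what forces the six agents into six \emph{distinct} equivalence classes, so that the type composition is genuinely $(1,\dots,1)$ and not a coarser one. From the resulting game I would read off a small matrix $\tilde{S}$ of winning vectors and a small matrix $\tilde{L}$ of losing vectors, together with multipliers $x,y$ satisfying $\Vert x\Vert_1=\Vert y\Vert_1>0$ and $x\tilde{S}\le y\tilde{L}$; the most economical shape to aim for is two winning and two losing vectors with $x=y=(1,1)$, i.e.\ two winning coalition vectors whose componentwise sum is dominated by that of two losing coalition vectors. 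Lemma~\ref{lemma_non_weighted_certificate} then certifies non-weightedness.

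To pass from $t$ to $t+1$ I would keep the certificate untouched and merely enlarge the game. Given a non-weighted game on $t$ distinct types with certificate $(\tilde{S},\tilde{L},x,y)$, I would adjoin one new, strictly least desirable agent, kept non-null by declaring exactly one maximal losing coalition of the current game to become winning once the new agent joins it. This preserves completeness, places the new agent in its own bottom equivalence class strictly below the previous weakest type, and --- crucially --- changes the status of no coalition that avoids the new agent. Hence the rows of $\tilde{S}$ stay winning and those of $\tilde{L}$ stay losing; appending a zero column to $\tilde{S}$ and $\tilde{L}$ preserves $x\tilde{S}\le y\tilde{L}$, so the same $x,y$ still certify non-weightedness. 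Iterating this step from the $t=6$ witness yields a non-weighted game on $t$ distinct types for every $t\ge 6$, and Lemma~\ref{lemma_non_weighted_extension} lifts each to all compositions $(n_1,\dots,n_t)$.

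The main obstacle is the base case. The tension is that condition~(iii), which I need to keep all six types distinct, pulls against the existence of a trade: in a complete simple game the preorder $\preceq$ tends to make any balanced family of losing coalitions dominate the winning ones. This is already visible when one tries to ``split'' the two types of Example~\ref{ex_csg} into singletons, where every candidate separating shift-minimal vector either turns one of the two losing certificate coalitions into a winning one or becomes redundant by dominating an existing row. Reconciling distinctness with a trade therefore forces a careful choice of $\mathcal{S}$ --- most easily located by a short search over the admissible matrices of Theorem~\ref{thm_characterization_cs} --- after which checking both that the game is well defined and that $x\tilde{S}\le y\tilde{L}$ holds is a routine finite verification. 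The reduction and the induction step, by contrast, are bookkeeping.
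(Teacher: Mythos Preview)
Your reduction to the all-ones composition $(1,\dots,1)$ via Lemma~\ref{lemma_non_weighted_extension} is exactly the paper's move, and your diagnosis that the base case $t=6$ is the crux is accurate: the paper simply writes down an explicit $5\times 6$ matrix $\mathcal{S}$ satisfying Theorem~\ref{thm_characterization_cs}(a) and asserts non-weightedness, so the search you describe is precisely what is needed there.

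Where you diverge from the paper is the passage from $t$ to $t+1$. The paper does \emph{not} induct: it extends the $t=6$ matrix in one stroke by appending, for each additional column, an alternating $1,0,1,0,\dots$ or $0,1,0,1,\dots$ pattern down the five rows. This keeps the rows pairwise $\bowtie$ (the first six partial sums already cross), makes condition~(iii) hold at every new column boundary, and leaves the original certificate intact after zero-padding. It is a single explicit family of matrices, so every claim is a finite check.

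Your inductive step, by contrast, has a genuine gap. The assertion ``declaring exactly one maximal losing coalition $L$ to become winning once the new agent joins it \dots\ preserves completeness'' is not true for an arbitrary maximal losing $L$. If $k\in L$, $i\notin L$ with $i\sqsupset k$ strictly, then in the enlarged game you need $v'\big((L\setminus\{k\}\cup\{i\})\cup\{n{+}1\}\big)=1$; but $L\setminus\{k\}\cup\{i\}$ need not be winning in the old game when $L$ is merely maximal (not shift-maximal) losing, and it is certainly $\neq L$, so the new game would violate $i\sqsupset' k$. The fix is to take $L$ shift-maximal losing: then $L\setminus\{k\}\cup\{i\}\succ L$ forces it to be winning. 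You also need to argue that the new agent lands in a \emph{strictly} new bottom class; this requires exhibiting a winning $S\ni j$ (the old weakest agent) with $S\setminus\{j\}$ losing and $\neq L$, which again constrains the choice of $L$ (for instance, choose $L$ shift-maximal with $j\in L$, then any minimal winning $S\ni j$ works since $j\notin S\setminus\{j\}$). None of this is insurmountable, but it is not the bookkeeping you suggest; the paper sidesteps all of it with the explicit column pattern.
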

\begin{proof}
  The game $(\widehat{m},\mathcal{S})$ with
  $$
    \mathcal{S}=\begin{pmatrix}
      1 & 1 & 0 & 0 & 0 & 0\\
      1 & 0 & 1 & 0 & 0 & 1\\
      1 & 0 & 0 & 1 & 1 & 0\\
      0 & 1 & 1 & 1 & 0 & 0\\
      0 & 0 & 1 & 1 & 1 & 1 
    \end{pmatrix}
  $$
  is complete, non-weighted, and has $\widehat{m}=(1,1,1,1,1,1)$ as its type composition with $6$ types.
  For $t>6$ we consider the complete simple non-weighted game with type composition with 
  $\widehat{m}=(1,\dots,1)\in\mathbb{N}_{>0}^t$ uniquely characterized by its matrix
  $$
    \mathcal{S}=\left(\begin{array}{cccccccccccc}
      1 & 1 & 0 & 0 & 0 & 0 & | & 1 & 0 & 1 & 0 & \dots \\
      1 & 0 & 1 & 0 & 0 & 1 & | & 0 & 1 & 0 & 1 & \dots \\
      1 & 0 & 0 & 1 & 1 & 0 & | & 1 & 0 & 1 & 0 & \dots \\
      0 & 1 & 1 & 1 & 0 & 0 & | & 0 & 1 & 0 & 1 & \dots \\
      0 & 0 & 1 & 1 & 1 & 1 & | & 1 & 0 & 1 & 0 & \dots 
    \end{array}\right)
  $$
  of SMW vectors. Lemma~\ref{lemma_non_weighted_extension} transfers
  the result to arbitrary type compositions $\widehat{n}$ with $t\ge 6$ types.
\end{proof}

With the help of lemmas~\ref{lemma_non_weighted_extension} and \ref{lemma_many_types},
we can propose the following strategy to classify all weighted type compositions $\widehat{n}$.
For small $n$, determine all complete simple games with at most $5$ types of agents and determine 
which ones are weighted or non-weighted. Taking only the smallest examples, we obtain a
generalized version of Lemma~\ref{lemma_not_all_weighted_special} and may hope
that all other cases correspond to weighted type compositions. Doing that we obtain:
  
\begin{lemma}
  \label{lemma_not_all_weighted}
  For each type composition $\widehat{n}=(2,4)$, $(2,2,2),(1,1,5),(1,2,3),(1,3,2)$,
  $(2,1,4),(2,4,1),(1,1,1,3),(1,1,3,1),(2,1,2,1),(2,3,1,1),(1,2,2,1),(1,2,1,2)$,\,\,\, $(1,1,2,2),
  (1,2,1,1,1),(1,1,2,1,1)$, $(1,1,1,2,1),(1,1,1,1,2)$ there exists a non-weighted complete
  simple game attaining $\widehat{n}$. 
\end{lemma}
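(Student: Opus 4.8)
The plan is to prove this lemma by explicitly constructing, for each of the eighteen listed type compositions, a single complete simple game that attains it and is certifiably non-weighted. Following Definition~\ref{def_weighted_composition_type}, it suffices to exhibit one non-weighted game per type composition, so the entire statement reduces to a finite list of concrete certificates. For each $\widehat{n}$ I would write down an explicit matrix $\mathcal{S}$ of SMW vectors satisfying conditions (i)--(iv) of Theorem~\ref{thm_characterization_cs}, which guarantees that a genuine complete simple game exists and is uniquely determined by the pair $\left(\widehat{n},\mathcal{S}\right)$.

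The engine driving each non-weightedness proof is Lemma~\ref{lemma_non_weighted_certificate}: I would supply, alongside each game, a matrix $\tilde{S}$ of winning vectors, a matrix $\tilde{L}$ of losing vectors, and non-negative multiplier vectors $x,y$ with $\Vert x\Vert_1=\Vert y\Vert_1>0$ and $x\tilde{S}\le y\tilde{L}$. To find these certificates systematically I would run the verbatim routine from the excerpt to compute the shift-maximal losing matrix $\mathcal{L}$ for each candidate game, then search (by hand or linear programming) for a convex combination of winning vectors dominated componentwise by a convex combination of losing vectors. The archetype is already supplied by the excerpt: the game $\widehat{n}=(2,4)$, $\mathcal{S}=\begin{pmatrix}2&0\\0&4\end{pmatrix}$ with $\mathcal{L}=\begin{pmatrix}1&2\end{pmatrix}$ is non-weighted, and one checks that $\tfrac12$ of each SMW vector is dominated by the losing vector $(1,2)$.

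A key economy I would exploit is Lemma~\ref{lemma_non_weighted_extension}: once a non-weighted game is established for some $\widehat{n}$, every type composition $\widehat{m}\ge\widehat{n}$ inherits non-weightedness using the \emph{same} certificate $(x,y)$. This lets me avoid treating each of the eighteen cases independently; instead I would identify a small set of minimal non-weighted compositions (in the componentwise order, after the natural reductions), verify those directly, and let the extension lemma propagate the conclusion to the dominating members of the list. Thus the real work collapses to a handful of base cases, each closed by a short arithmetic check that the proposed $x,y$ satisfy $x\tilde{S}\le y\tilde{L}$.

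The main obstacle is the search itself: for each base case one must actually discover a valid pair of dominating convex combinations, and there is no closed-form recipe guaranteeing that small integer multipliers exist. Because the games live in low dimension ($t\le 5$) with very small $n_i$, the associated feasibility linear programs are tiny, so I expect the certificates to be found quickly and to involve only simple rational (indeed small integer) entries; the completeness of the search is underwritten by the converse of Lemma~\ref{lemma_non_weighted_certificate} noted in the excerpt, which assures that \emph{whenever} a game is non-weighted such multipliers must exist. The remaining effort is bookkeeping: confirming for every listed $\widehat{n}$ that the exhibited $\mathcal{S}$ meets the parameterization conditions, and tabulating the eighteen certificates compactly, grouping the cases that follow from one another via the extension lemma.
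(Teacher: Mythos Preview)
Your proposal is essentially correct and matches the paper's approach: the paper proves the lemma in the Appendix by listing, for each of the eighteen type compositions, an explicit matrix $\mathcal{S}$ of SMW vectors (so Theorem~\ref{thm_characterization_cs} guarantees a complete simple game), a matrix $\tilde{L}$ of losing vectors, and multiplier vectors $x,y$ satisfying the hypothesis of Lemma~\ref{lemma_non_weighted_certificate}.

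One small correction: your proposed economy via Lemma~\ref{lemma_non_weighted_extension} does not actually buy anything here. That lemma compares type compositions of the \emph{same} length $t$ componentwise, and the eighteen listed compositions are pairwise $\le$-incomparable within each fixed $t$ (for instance, $(2,2,2)$, $(1,1,5)$, $(1,2,3)$, $(1,3,2)$, $(2,1,4)$, $(2,4,1)$ form an antichain in $\mathbb{N}^3$). So none of the eighteen cases can be derived from another by extension; each requires its own certificate. The paper therefore simply tabulates all eighteen certificates directly, found by exhaustive enumeration of complete simple games with at most seven agents, and your plan will in practice collapse to doing the same.
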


\begin{conjecture}
  \label{main_conj}
  Each type composition $\widehat{n}$ is either weighted or there exists a type composition $\widehat{m}$,
  contained in the list of Lemma~\ref{lemma_not_all_weighted}, with $\widehat{n}\ge\widehat{m}$.
\end{conjecture}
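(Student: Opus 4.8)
The plan is to establish the claimed dichotomy in two directions, the easy one resting on Lemma~\ref{lemma_non_weighted_extension} and the hard one requiring a uniform weightedness argument. The direction ``$\widehat n\ge\widehat m$ for some listed $\widehat m$ implies $\widehat n$ non-weighted'' is immediate: by Lemma~\ref{lemma_not_all_weighted} every listed $\widehat m$ carries a non-weighted complete simple game, and Lemma~\ref{lemma_non_weighted_extension} propagates non-weightedness to each $\widehat n\ge\widehat m$. In order-theoretic language, Lemma~\ref{lemma_non_weighted_extension} says that the set of non-weighted type compositions is an up-set (filter) for the componentwise order $\ge$, so the weighted compositions form a down-set; the real content of the conjecture is that, for a fixed number $t\le 5$ of types, the listed vectors are exactly the minimal generators of this filter. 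Comparisons $\widehat n\ge\widehat m$ are understood within a fixed $t$, and the case $t\ge 6$ is covered outright by Lemma~\ref{lemma_many_types}.

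First I would make the order-theoretic reduction explicit. For each $t\in\{2,3,4,5\}$ the complement of the filter generated by the listed $t$-type vectors is a down-set, described by the finite disjunction obtained from negating the membership conditions $\widehat n\ge\widehat m$. A short finite case distinction then shows that this down-set decomposes into finitely many one-parameter families: in each family exactly one coordinate ranges over all of $\mathbb{N}_{>0}$ while every other coordinate is bounded by an explicit small constant. For $t=5$ the only surviving family is $(N,1,1,1,1)$; for $t=4$ one obtains $(1,b,1,1)$, $(a,1,1,1)$, $(a,2,1,1)$, $(a,1,1,2)$ together with the isolated vector $(1,1,2,1)$. This matches the heuristic that ``from all but one type there have to be very few agents'' and reduces the conjecture to the assertion that every complete simple game whose type composition lies in one of these families is weighted.

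Next I would fix one such family, with unbounded coordinate $k$ and all remaining coordinates held at fixed small values, and prove weightedness uniformly in the free parameter $N=n_k$. Since a game is monotone in each coordinate, for every fixed pattern $p$ of the bounded coordinates there is a threshold $\tau(p)\in\{0,\dots,N+1\}$ such that a coalition vector is winning if and only if its $k$-th entry is at least $\tau(p)$; there are only boundedly many patterns, so a game in the family is encoded by a bounded list of thresholds, monotone in the sense that $p'\ge p$ forces $\tau(p')\le\tau(p)$. Weightedness then amounts to realising these thresholds by an affine function of $p$ followed by rounding in coordinate $k$, which I would establish by ruling out a non-weighted certificate in the sense of Lemma~\ref{lemma_non_weighted_certificate}. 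Concretely, I would argue that any putative pair of weighted multisets with $x\tilde S\le y\tilde L$ and $\Vert x\Vert_1=\Vert y\Vert_1>0$ can be replaced by one whose vectors use only boundedly many distinct values of coordinate $k$, since the feasible certificates form a rational polyhedral cone whose dependence on $N$ is eventually piecewise affine. This would yield a bound $N_0$, depending only on $t$ and the fixed small coordinates, such that a family is weighted for all $N$ as soon as it is weighted for all $N\le N_0$; the finitely many remaining cases are then settled computationally, e.g.\ with the enumeration machinery of \cite{kurz2013dedekind}, which also certifies that the number of SMW vectors stays bounded.

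The hard part will be exactly this last, uniform-in-$N$ step. Verifying a single family for small $N$ by enumeration is routine, but upgrading ``weighted for all small $N$'' to ``weighted for all $N$'' requires controlling how the SMW matrix, and hence the losing region, deforms as the free coordinate grows, and excluding the possibility that some large, genuinely new SMW vector creates a certificate not visible at small $N$. Making the bounded-certificate reduction rigorous---showing that a certificate for large $N$ forces one using only bounded $k$-entries while respecting the actual winning and losing status inside the family---is the delicate point, and is presumably the reason the statement is phrased as a conjecture rather than a theorem.
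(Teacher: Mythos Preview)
The statement is labeled a \emph{Conjecture} in the paper and is not fully proved there: the paper settles $t\le 2$ (explicit case analyses in the appendix) and $t\ge 6$ (Lemma~\ref{lemma_many_types}), proposes the algorithmic scheme of Section~\ref{sec:weighted_composition_types} for $t\in\{3,4,5\}$, and then states that for these values ``the necessary case differentiations become more and more complex so that we aim for a computer aided proof''. So there is no complete paper proof to compare against, only a partially executed strategy.

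Your reduction to finitely many one-parameter families via the filter/down-set language is the same reduction the paper performs, and your lists for $t=4,5$ match Corollary~\ref{main_cor}. Where you diverge is in the treatment of a single family uniformly in the free parameter. The paper's plan is \emph{primal}: Lemma~\ref{lemma_shift_minimal_winning_candidates} shows that every SMW vector lies in a finite set $C(\omega)$ of vectors depending affinely on boundedly many auxiliary parameters $m_j$; one then enumerates the finitely many parametric matrices $(\mathcal{S}_h,P_h)$, extracts parametric losing matrices $\mathcal{L}_{h,j}$, and for each subcase exhibits an explicit parametric weighted representation arising as an LP basis solution (Step~4). Your plan is \emph{dual}: encode a game by its threshold function $\tau(p)$ on the bounded patterns and rule out any certificate in the sense of Lemma~\ref{lemma_non_weighted_certificate} by a compactness claim that certificates for large $N$ can be pushed down to ones with bounded entries in the free coordinate.

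The gap is precisely where you place it, and it is real rather than merely technical. Bounding the \emph{values} appearing in coordinate $k$ of a certificate is not the same as producing a certificate inside a game with small $n_k$: after truncating $n_k$ the thresholds $\tau(p)$ move (some saturate at $n_k{+}1$, possibly collapsing equivalence classes and changing the type composition), so vectors that were winning or losing need not retain that status in the truncated game, and your polyhedral-cone argument does not control this. The paper avoids the issue altogether because $|C(\omega)|$, and hence the number $r$ of SMW rows, is bounded independently of the free coordinate; the list of subcases $(\mathcal{S}_h,\mathcal{L}_{h,j},P_{h,j})$ is therefore finite and one writes down weights case by case. That route is tedious but becomes a proof once executed; your dual route would be more conceptual, but as written the certificate-reduction step is a heuristic, which is consistent with the paper leaving the statement as a conjecture.
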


In the next section, we propose an algorithmic approach to prove Conjecture~\ref{main_conj}.
As an example, we prove some special cases in Subsection~\ref{subsec:example_for_algorithm}.

\section{\uppercase{Weighted type compositions}}
\label{sec:weighted_composition_types}

\noindent
Due to Lemma~\ref{lemma_not_all_weighted_special}, for $t=2$ types of agents, the only possible
candidates for weighted type compositions are of the form $(1,\star)$, $(\star,1)$, $(\star,2)$,
and $(\star,3)$, where $\star$ stands for an arbitrary positive integer. 

\begin{definition}
  A set $\omega=(n_1,\dots,n_{i-1},\star,n_{i+1},\dots,n_t)$ of type compositions is called $i\star$ 
  family with $t$ types. We call $\omega$ weighted if all of its elements are weighted.
\end{definition}

\begin{corollary}(of Conjecture~\ref{main_conj})
 \label{main_cor}
 $(\star)$, $(1,\star)$, $(\star,1)$, $(\star,2)$, $(\star,3)$, $(\star,1,1)$, $(\star,1,2)$, $(\star,1,3)$,
 $(\star,2,1)$, $(\star,3,1)$, $(1,\star,1)$, $(1,1,4)$, $(1,2,2)$, $(\star,1,1,1)$, $(\star,1,1,2)$,
 $(\star,2,1,1)$, $(1,\star,1,1)$, $(1,1,2,1)$, and $(\star,1,1,1,1)$ are weighted. 
\end{corollary}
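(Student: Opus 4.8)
The plan is to establish Corollary~\ref{main_cor} as a consequence of Conjecture~\ref{main_conj} by showing that none of the listed families $\omega$ dominates (in the $\ge$ sense) any entry of the forbidden list in Lemma~\ref{lemma_not_all_weighted}. Recall that Conjecture~\ref{main_conj} asserts that a type composition $\widehat{n}$ is weighted unless there is a forbidden $\widehat{m}$ with $\widehat{n}\ge\widehat{m}$; since a family is weighted exactly when all its members are weighted, it suffices to prove that for every member $\widehat{n}$ of every listed family, and for every forbidden $\widehat{m}$ of the \emph{same number of types}, we do not have $\widehat{n}\ge\widehat{m}$. The comparison $\ge$ is componentwise and only compares vectors of equal length, so the whole task splits into finitely many checks organized by the number $t$ of types.

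First I would fix the number of types and treat the families one $t$-block at a time. For $t=1$ there are no forbidden compositions at all, so $(\star)$ is automatically safe. For $t=2$, the forbidden set contains only $(2,4)$, and I would verify that each of $(1,\star)$, $(\star,1)$, $(\star,2)$, $(\star,3)$ fails to dominate $(2,4)$: the first two have a coordinate equal to $1<2$ or $<4$ in the wrong slot, while $(\star,2)$ and $(\star,3)$ have second coordinate $2,3<4$. Note the subtle point that $\star$ ranges over \emph{all} positive integers, so domination must fail \emph{uniformly} in $\star$; this is exactly why the fixed coordinates are what block domination, and one must check that the blocking coordinate is never the $\star$-coordinate. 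I would then proceed to $t=3$, where the relevant forbidden vectors are $(2,2,2),(1,1,5),(1,2,3),(1,3,2),(2,1,4),(2,4,1)$, and to $t=4$ and $t=5$ analogously, each time listing the same-length forbidden vectors and checking each listed family against each of them.

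The bulk of the work is the bookkeeping for $t=3,4,5$, where a family such as $(\star,1,3)$ or $(\star,1,1,2)$ must be checked against six or more forbidden vectors; the one genuinely delicate family is $(1,\star,1)$, which must avoid $(1,3,2)$ and $(1,2,3)$ and friends, and here I would emphasize that the third coordinate $1$ is smaller than the $2$ or $3$ appearing in those forbidden vectors, so domination fails regardless of the value of the middle $\star$. The main obstacle I anticipate is not mathematical depth but exhaustiveness: one must be certain that the forbidden list of Lemma~\ref{lemma_not_all_weighted} is complete for each $t$ and that no family has been overlooked, since a single missed comparison would invalidate the corollary. I would therefore cross-check by confirming that for every family the blocking coordinate is a \emph{fixed} (non-$\star$) entry strictly below the corresponding forbidden entry, which guarantees non-domination for every specialization of $\star$ and completes the argument.
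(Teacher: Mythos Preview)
Your approach is correct and is exactly what the paper intends: the corollary is labelled ``(of Conjecture~\ref{main_conj})'' precisely because its derivation is the finite componentwise non-domination check you describe, and the paper offers no separate proof beyond this. One small remark: your worry that ``the forbidden list of Lemma~\ref{lemma_not_all_weighted} is complete'' is not an additional obligation---completeness of that list is exactly the content of Conjecture~\ref{main_conj}, so once the conjecture is assumed, the only work left is the coordinate-by-coordinate bookkeeping you outline.
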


In this section, we propose an algorithm capable to prove that the type compositions of a given
$i\star$ family $\omega$ with $t$ types are weighted (if true). 

\subsection{Step 1: SMW Vectors}
\label{subsec:step1}
Consider a complete simple game with type composition $\widehat{n}\in\omega$ and matrix $\mathcal{S}$ of
its SMW vectors. We aim to write down a finite set of parameterized candidates for the rows
of $\mathcal{S}$ only depending on $\omega$. With $\alpha=\left\{a=(a_1,\dots,a_{i-1})\mid
0\le a_j\le n_j,\, 1\le j\le i-1\right\}$ we introduce the injective function 
$\tau:\alpha\rightarrow \mathbb{N}$ by $\tau(a)=\sum_{j=1}^{i-1} a_i\prod_{k=j+1}^{i-1} (n_k+1)$, i.e., we are
just numbering the elements of $\alpha$ in a convenient way. Additionally we set
$\tau(\omega):=\tau\big((n_1,\dots,n_{i-1})\big)+1=|\alpha|$. 

\begin{lemma}
  \label{lemma_shift_minimal_winning_candidates}
  Given an $i\star$ family $\omega$ with $t$ types let $C(\omega)$
  $$
    =\left\{\left(a,m_{\tau(a)}-c,b\right)\mid a\in \alpha, b\in \beta, c\in\mathbb{N},c\le\Lambda \right\},
  $$
  where $\beta=\left\{\left(b_{i+1},\dots,b_n\right)\in\mathbb{N}^{n-i}\mid
  b_h\le n_h\right\}$, $\Lambda=\sum_{h=i+1}^n n_h$, and the $m_j$ are free variables.
  For each simple game with type composition $\widehat{n}\in\omega$ and matrix 
  $\mathcal{S}=(\tilde{s}_1,\dots,\tilde{s}_r)^T$ of its SMW vectors,
  there exists an allocation of the $m_j$, such that $\tilde{s}_h\in C(\omega)$ for all
  $1\le h\le r$.
\end{lemma}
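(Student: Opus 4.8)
The plan is to produce, for a fixed complete simple game with type composition $\widehat{n}\in\omega$, an explicit allocation of the free variables $m_j$ and then to verify that each of its SMW rows $\tilde{s}_h$ matches the template defining $C(\omega)$. I would write each row as $\tilde{s}_h=\left(a^{(h)},s_{h,i},b^{(h)}\right)$, where $a^{(h)}=(s_{h,1},\dots,s_{h,i-1})$ is the part before the starred coordinate, $s_{h,i}$ is the starred coordinate itself, and $b^{(h)}=(s_{h,i+1},\dots,s_{h,t})$ is the part after it. Property~(i) of Theorem~\ref{thm_characterization_cs} guarantees $a^{(h)}\in\alpha$ and $b^{(h)}\in\beta$, so the only thing left to arrange is that $s_{h,i}$ can be written as $m_{\tau(a^{(h)})}-c$ with $c\in\{0,\dots,\Lambda\}$.

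The allocation I would use groups the rows by their prefix. Since $\tau$ is injective on $\alpha$, distinct prefixes receive distinct variables, so there is no clash: for every $a\in\alpha$ that occurs as some $a^{(h)}$, set $m_{\tau(a)}:=\max\{\,s_{h,i}\mid a^{(h)}=a\,\}$, and assign the remaining (unused) variables arbitrarily. With this choice $c_h:=m_{\tau(a^{(h)})}-s_{h,i}\ge 0$ holds automatically, so it remains to prove the upper bound $c_h\le\Lambda$; equivalently, that among all SMW rows sharing a common prefix the starred coordinates span a range of at most $\Lambda$. This is the one real obstacle, and it is where shift-minimality enters.

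To establish the bound I would take two distinct SMW rows $\tilde{s}_h,\tilde{s}_k$ with the same prefix $a$ and, without loss of generality, $s_{h,i}\ge s_{k,i}$. By property~(ii) they are incomparable, $\tilde{s}_h\bowtie\tilde{s}_k$, so in particular $\tilde{s}_k\not\preceq\tilde{s}_h$. Since the two rows agree on the first $i-1$ coordinates, their partial sums coincide up to index $i-1$ and satisfy $\sum_{j=1}^{i}s_{h,j}\ge\sum_{j=1}^{i}s_{k,j}$; hence the failure of $\tilde{s}_k\preceq\tilde{s}_h$ must occur at some index $\ell>i$, i.e.
\[
  \sum_{j=1}^{\ell}s_{k,j}>\sum_{j=1}^{\ell}s_{h,j}.
\]
Cancelling the common prefix sums and the equal terms, this rearranges to
\[
  s_{h,i}-s_{k,i}<\sum_{j=i+1}^{\ell}s_{k,j}-\sum_{j=i+1}^{\ell}s_{h,j}\le\sum_{j=i+1}^{t}n_j=\Lambda,
\]
where the last inequality uses $s_{k,j}\le n_j$ together with $s_{h,j}\ge 0$.

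Thus $s_{h,i}-s_{k,i}\le\Lambda$ for every such pair, so the range of starred coordinates over rows with prefix $a$ is at most $\Lambda$, giving $0\le c_h\le\Lambda$ and hence $\tilde{s}_h=\left(a,m_{\tau(a)}-c_h,b^{(h)}\right)\in C(\omega)$. Rows whose prefix is unique are covered trivially (with $c_h=0$), and prefixes that do not occur at all impose no constraint on the corresponding $m_j$, which completes the verification. The essential content is therefore entirely the incomparability estimate of the third paragraph; everything else is bookkeeping about the prefix/middle/suffix decomposition and the definition of $\tau$.
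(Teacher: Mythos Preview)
Your proof is correct and follows essentially the same approach as the paper's: you allocate $m_{\tau(a)}$ as the maximum $i$th coordinate among SMW rows with prefix $a$, and then use the incomparability condition~(ii) of Theorem~\ref{thm_characterization_cs} between two such rows to force the failure of $\preceq$ beyond index $i$, yielding the bound $c\le\Lambda$. The paper's argument is the same (comparing an arbitrary row to the one realising the maximum), only written more tersely and with the unused $m_j$ set to $-1$ rather than left arbitrary.
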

\begin{proof}
  Given a complete simple game with type composition $\widehat{n}\in\omega$ and matrix 
  $\mathcal{S}=(\tilde{s}_1,\dots,\tilde{s}_r)^T$ of its SMW vectors.
  For each $a\in\alpha$ let $\tilde{t}_a=(a,m,b)$, where $b=(b_{i+1},\dots,b_n)\in\beta$,
  the row-vector of $\mathcal{S}$ whose first $i-1$ coordinates coincide with $a$ and
  whose $i$th coordinate $m\in\mathbb{N}$ is maximal. If $\tilde{t}_a$ exists, we set 
  $m_{\tau(a)}=m$ and $m_{\tau(a)}=-1$ otherwise. Now let $\tilde{s}_j=(a,m',b')$ be an
  arbitrary SMW vector whose first $i-1$ components coincide with $a$.
  Clearly $m'\in\mathbb{N}$, $m'\le m$, and $b'=(b'_{i+1},\dots,b'_n)\in \beta$.   
  If $m'<m$ then there exists an index $k\in \{i+1,\dots,n\}$ with
  $m'+\sum_{h=i+1}^k b'_h>m+\sum_{h=i+1}^k b_h$ since $\sum_{h=1}^{i-1}a_h+m'<\sum_{h=1}^{i-1}a_h+m$
  and $\tilde{t}_a\bowtie \tilde{s}_j$. With $\sum_{h=i+1}^k b_h\ge 0$ and
  $\sum_{h=i+1}^k b'_h\le\sum_{h=i+1}^n n_h$ we have $m'> n-\Lambda$.  
\end{proof}

Thus we can parameterize the potential SMW vectors of a complete simple game
attaining $\omega$ using at most $\tau(\omega)$ parameters as elements in $C(\omega)$,
where 
\begin{equation}
  |C(\omega)|\le \left(\sum_{j=1,j\neq i}^n n_j\right)\cdot\prod_{j=1,j\neq i}^n (n_j+1), 
\end{equation}
i.e., the number rows $r$ is bounded by $\omega$.

\subsection{Step 2: Matrices of All Shift-minimal Winning Vectors}
\label{subsec:step2}

Given an $i\star$ family $\omega$ with $t$ types, the sets of SMW vectors 
of a complete simple game with type composition $\widehat{n}\in\omega$ are subsets of $C(\omega)$.
Thus, we can loop over all elements of $2^{C(\omega)}$ and need to check whether the selected subsets
satisfy the conditions of Theorem~\ref{thm_characterization_cs}(a). The technical difficulty we have
to face here is, that the entries can linearly depend on the parameters $m_j$. In \cite{kurz2013dedekind}
the similar situation, where all entries $s_{i,j}$ are parameters, has been treated. There it is shown
that all feasible cases, meeting the conditions of Theorem~\ref{thm_characterization_cs}(a), can be
formulated as a union of systems of linear inequality systems in terms of the parameters. Exemplarily, 
condition (a)(ii) is satisfied for a pair of indices $1\le i<j\le r$, if two further indices
$1\le h,k\le t$ exist with
$$
  \sum_{u=1}^h s_{i,u}+1\le \sum_{u=1}^hs_{j,u}
  \,\,\text{ and }\,\,
  \sum_{u=1}^k s_{i,u}\ge 1+ \sum_{u=1}^k s_{j,u}.
$$ 
Performing these steps yields a finite list $\mathcal{S}_1,\mathcal{S}_2,\dots$ of matrices, whose
entries are linear functions of the parameters $m_j$, such that the rows of each matrix $\mathcal{S}_h$
are the (parametric) SMW vectors of complete simple games with type composition in $\omega$
whenever the parameters $m_i$ satisfy the linear constraints of the corresponding polytope $P_h$. Moreover,
all complete simple games with type composition in $\omega$ are captured by one of the pairs $\mathcal{S}_h$,
$P_h$. (It is indeed possible to obtain a partition of the desired space.)  

\subsection{Step 3: Losing Vectors}
\label{subsec:step3}
For simplicity, we assume that we are given a single pair $\left(\mathcal{S}_h,P_h\right)$ according to 
Subsection~\ref{subsec:step2}. In \cite{kurz2013dedekind} the parametric Barvinok algorithm was applied
to count the respective number of complete simple games. Here we want to study weightedness so that we also
need a description (not necessarily the most compact description) of the set of losing vectors. To this end,
we mention that the SML vectors of a complete simple game are either incomparable to
all SMW vectors, and so contained in $C(\omega)$, or arise as so-called shifts of one of
the SMW vectors, i.e., special vectors that have a fairly small $\Vert\cdot\Vert_1$-distance
to one of the SMW vectors. Due to space limitations we just mention, that it is possible
to exactly describe a set of all candidates for SML vectors, similar as $C(\omega)$ for
the set of SMW vectors. Then we can again consider subsets of the set of candidates and
have to check that the implications, with respect to be a winning or a losing vector, are non-contradicting
and that the state of each vector can be deduced in any case. If properly implemented with all technical details,
things boil down to a splitting of a sub case $(\mathcal{S}_h,P_h)$ into a finite list of sub sub cases 
$$
  \left(\mathcal{S}_{h},\mathcal{L}_{h,1},P_{h,1}\right),
  \left(\mathcal{S}_{h},\mathcal{L}_{h,2},P_{h,2}\right),\dots,
$$  
where the rows of the $\mathcal{L}_{h,j}$ correspond to (not necessarily shift-maximal) losing vectors
and the $P_{h,j}$ are sub polytopes of $P_h$.    

\subsection{Step 4: Weighted Representation}
\label{subsec:step4}
For simplicity, we assume that we are given a single triple $\Gamma=\big(\mathcal{S}_h,\mathcal{L}_h,P_h\big)$
according to Subsection~\ref{subsec:step3}. For each integral choice of the parameters $m_j$ in $P_h$, we have
a unique complete simple game at hand and can check whether it is weighted with the help of a linear program,
see e.g.\ \cite{0943.91005}. If at least one of such games is non-weighted, then we can use the methods of 
Section~\ref{sec:weightedness} to deduce that a certain class of type compositions is non-weighted. So let
us assume that all games corresponding to $\Gamma$ are indeed weighted.

Thus, for each (of the possibly infinitely many) complete simple games corresponding to $\Gamma$, there exist
feasible weights obtained at a basis solution of the corresponding linear program, which is uniquely
determined by $\Gamma$ but depends on the parameters $m_i$. Nevertheless, the number of possible basis solutions
is finite. Thus, we can loop over all possible parametric basis solutions $\delta_j$ and determine the
corresponding list of polytopes $P_{h,k}$, such that $\delta_j$ yields a feasible weighting of the complete
simple games corresponding to $\mathcal{S}_h,\mathcal{L}_h$ whenever the parameters $m_l$ are in $P_{h,k}$.
If $\cup_k P_{h,k} =P_h$, then $\omega$ is weighted for sub case $\Gamma$.  

\subsection{Examples}
\label{subsec:example_for_algorithm}
In the previous four subsections we have sketched an algorithm that is capable to prove that a given $i\star$
family $\omega$ with $t$ types is weighted (if the statement is indeed true). Due to space limitations, we have
not given all technical, sometimes non-trivially, details. Instead, we want to give examples for special cases.

\begin{lemma}
  $\omega=(\star)$ is weighted.
\end{lemma}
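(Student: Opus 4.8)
The plan is to show that any complete simple game with type composition $\widehat{n}=(\star)$ -- that is, a single type of $n$ interchangeable agents -- is weighted, which is precisely the content of May's Theorem as mentioned in the introduction. With $t=1$ type, each coalition vector is just a single integer $s\in\{0,1,\dots,n\}$ counting how many agents support the proposal, and by the monotonicity property (3) of simple games the winning coalition vectors form an up-set in $\{0,\dots,n\}$. Hence there is a threshold integer $k$ such that a coalition vector $s$ is winning if and only if $s\ge k$, where $1\le k\le n$ (with $k=n$ corresponding to unanimity and the boundary cases $v(\emptyset)=0$, $v(N)=1$ forcing $k\ge 1$).

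First I would invoke Theorem~\ref{thm_characterization_cs} specialized to $t=1$: condition (iii) reduces to $s_{1,1}>0$, and condition (ii) forces any two rows to be incomparable, but for $t=1$ the relation $\preceq$ is a total order on integers, so no two distinct integer rows can be incomparable. Therefore the matrix $\mathcal{S}$ consists of a single row $(k)$ with $k\ge 1$, confirming there is exactly one SMW vector. This establishes that the only complete simple games on one type are the threshold games parameterized by $k\in\{1,\dots,n\}$, matching the footnote count of $n$ games.

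Next I would exhibit an explicit weighting: set $w_i=1$ for every agent $i\in N$ and quota $q=k$. Then for any coalition $S$ we have $\sum_{i\in S}w_i=|S|$, so $\sum_{i\in S}w_i\ge q$ holds exactly when $|S|\ge k$, which by the threshold description is equivalent to $v(S)=1$. Since $q=k>0$ and the weights are nonnegative, this is a valid weighted representation in the sense of the weightedness definition, so every such game is weighted and thus $\omega=(\star)$ is a weighted type composition.

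I do not expect a genuine obstacle here, since this is the base case and essentially a restatement of May's Theorem; the only point requiring mild care is correctly reading off the threshold structure from the definitions -- verifying that the up-set property of winning vectors together with the boundary conditions $v(\emptyset)=0$ and $v(N)=1$ pins down a single threshold $k$ with $1\le k\le n$, and checking that for $t=1$ the SMW matrix degenerates to a single row. These are routine once the specialization of Theorem~\ref{thm_characterization_cs} to $t=1$ is spelled out.
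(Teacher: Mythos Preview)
Your proof is correct and arrives at exactly the same conclusion as the paper: for $t=1$ the matrix $\mathcal{S}$ collapses to a single row $(k)$ with $1\le k\le n_1$, and the representation $[k;1,\dots,1]$ witnesses weightedness. The only difference is presentational: the paper phrases the argument as an instance of its four-step algorithm (computing $\tau(\omega)=1$, $\Lambda=0$, $C(\omega)=\{(m_0)\}$, the single losing vector $(m_0-1)$, and the basis solution $q=m_0$, $w_1=1$), whereas you give the same content as a direct argument via Theorem~\ref{thm_characterization_cs} and an explicit threshold.
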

\begin{proof}
  According to Step 1 we have $\tau(\omega)=1$ parameter $m_0$, $\Lambda=0$, and $C(\omega)=\{(m_0)\}$ with
  $|C(\omega)|=1$. Since each complete simple game consists of a least one shift-minimal winning coalition
  we obtain the one-element list $\Big(\mathcal{S}_1=(m_0),P_1=\{(m_0)\in\mathbb{R}^1\mid 1\le m_0\le n_1\}\Big)$.
  Since all elements of $C(\omega)$ have already been assigned to be shift-minimal winning, there remains
  the unique shifted vector $(m_0-1)$ to be losing in Step~3. In Step~4, we can obtain the basis solution
  $q=m_0$, $w_1=1$, which is feasible for the entire polytope $P_1$. (We only state the weights for each type of
  agents, numbered from $1$ to $t$.)
\end{proof}

\begin{lemma}
  $\omega=(1,\star)$ is weighted.
\end{lemma}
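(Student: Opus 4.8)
The plan is to follow the four-step algorithm just demonstrated for $\omega=(\star)$, now applied to the family $\omega=(1,\star)$ where the $\star$ sits in the second position, i.e. $i=2$. First I would carry out Step~1 to parameterize the candidate SMW vectors. Here $\alpha=\{a=(a_1)\mid 0\le a_1\le n_1=1\}=\{(0),(1)\}$, so there are $\tau(\omega)=2$ free parameters, say $m_0$ and $m_1$ (indexed via $\tau$), and $\Lambda=0$ since there are no coordinates to the right of position~$i=2$. Consequently $\beta$ is the empty-tuple set and the candidate set is $C(\omega)=\{(0,m_0),(1,m_1)\}$ with only two elements. This keeps the combinatorics trivially small and is the reason $(1,\star)$ is tractable by hand.

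Next, in Step~2 I would enumerate the admissible subsets of $C(\omega)$ against the conditions of Theorem~\ref{thm_characterization_cs}(a). A single-row matrix $\mathcal{S}=(1,m_1)$ is always admissible, giving the one-parameter weighted subfamily. The interesting case is the two-row matrix $\mathcal{S}=\bigl(\begin{smallmatrix}0&m_0\\1&m_1\end{smallmatrix}\bigr)$, which requires the incomparability condition (a)(ii): the rows $(0,m_0)$ and $(1,m_1)$ must satisfy $(0,m_0)\bowtie(1,m_1)$. Since the first partial sums are $0<1$, the $\preceq$-direction forces $m_0>1+m_1$, i.e.\ $m_0\ge m_1+2$, together with the ordering condition (a)(iv) $\widehat{s}_1\gtrdot\widehat{s}_2$. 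This determines the polytope $P$ in terms of $m_0,m_1$, with the type-composition constraints $1\le m_0\le n_2$, $0\le m_1\le n_2$. A handful of boundary cases (where one of the parameters degenerates) should be checked but are routine.

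For Step~3 I would record the losing vectors. The SML vectors are either members of $C(\omega)$ incomparable to all SMW rows, or shifts of the SMW rows with small $\Vert\cdot\Vert_1$-distance; concretely the shift of $(1,m_1)$ downward in the second coordinate gives the losing vector $(1,m_1-1)$, and since $(0,m_0)$ shifts to $(0,m_0-1)$ and also $(1,m_0-1)$ must be compared, I would list $\mathcal{L}$ as the maximal losing rows sitting just below the staircase defined by $\mathcal{S}$. Finally, Step~4 asks for an explicit weighting valid on the whole polytope. I expect the assignment $q=m_0$, with weight $m_0-m_1$ on the single type-$1$ agent and weight $1$ on each type-$2$ agent, to realize the game: a coalition with $s_1\in\{0,1\}$ type-$1$ agents and $s_2$ type-$2$ agents has weight $s_1(m_0-m_1)+s_2$, which meets $q=m_0$ exactly on the winning side defined by $\mathcal{S}$. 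The main obstacle I anticipate is Step~4, namely verifying that a single parametric basis solution covers the entire polytope $P$ rather than splitting into several sub-polytopes $P_{h,k}$; I would guard against this by checking the inequality $s_1(m_0-m_1)+s_2\ge m_0$ separately against each SMW row (to confirm winning) and each SML row (to confirm losing) and confirming the constraints reduce to the already-imposed condition $m_0\ge m_1+2$, so that $\cup_k P_{h,k}=P_h$ holds and $(1,\star)$ is weighted.
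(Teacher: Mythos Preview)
Your approach is the same four-step procedure the paper carries out, and your computation of $C(\omega)$, the incomparability constraint $m_0\ge m_1+2$, and the parametric weighting $[m_0;m_0-m_1,1]$ all agree with the paper. Two small omissions should be filled in. First, in Step~2 you must explicitly discard the singleton $\mathcal{S}=(0,m_0)$: it violates condition~(a)(iii) of Theorem~\ref{thm_characterization_cs}, since no row then has a positive first entry together with a second entry below $n_2$. (Also, in the two-row case condition~(a)(iv) forces the row order $(1,m_1)$ above $(0,m_0)$, not the reverse.) Second, and more substantively, your Step~4 weighting $[m_0;m_0-m_1,1]$ only covers the two-row case $\mathcal{S}_2$; for the one-row case $\mathcal{S}_1=(1,m_1)$ the parameter $m_0$ is not defined and the shift-maximal losing vector $(0,n_2)$ must be beaten. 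The paper handles $\mathcal{S}_1$ with the separate representation $[n_2+1;\,n_2+1-m_1,\,1]$, which you should also supply (it is, of course, your formula with $m_0$ formally replaced by $n_2+1$). With these two points addressed your argument coincides with the paper's.
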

\begin{proof}
  According to Step~1 we have $\tau(\omega)=2$ parameters $m_0,m_1$, $\Lambda=0$, and $C(\omega)=\{(0,m_0),(1,m_1)\}$
  with $|C(\omega)|=2$. In Step~2 we have to consider the three non-empty subsets of $C(\omega)$. For the
  case $\mathcal{S}=\begin{pmatrix}0&m_0\end{pmatrix}$ we observe that condition (a)(iii) of 
  Theorem~\ref{thm_characterization_cs} can not be met for any allocation of the parameters $m_0,m_1$. Thus, there
  remain only two cases with non-empty polytopes for the parameters:
  $$
    \mathcal{S}_1=\begin{pmatrix}1&m_1\end{pmatrix}, 
    P_1=\left\{(m_1)\in\mathbb{R}\mid 0\le m_1\le n_2-1\right\}
  $$  
  and
  $$
    \mathcal{S}_2=\begin{pmatrix}1&m_1\\0&m_0\end{pmatrix},
    P_2=\left\{\!\!\left(\!\!\!\!\!\begin{array}{c}m_0\\m_1\end{array}\!\!\!\!\!\right)\!\in\!\mathbb{R}^2\!\mid\!\!\!\! 
    \begin{array}{l}m_1\ge 0\\ m_1+2\le m_0\\m_0\le n_2
    \end{array}\!\!\!\!\!\right\}
    .
  $$
  In Step~3, each of the two sub cases is split into two sub sub cases 
  $\Gamma_1=(\mathcal{S}_1,\mathcal{L}_{1,1},P_{1,1})$,
  $\Gamma_2=(\mathcal{S}_1,\mathcal{L}_{1,2},P_{1,2})$,
  $\Gamma_3=(\mathcal{S}_2,\mathcal{L}_{2,1},P_{2,1})$,
  $\Gamma_3=(\mathcal{S}_2,\mathcal{L}_{2,2},P_{2,2})$,
  with
  $\mathcal{L}_{1,1}=\begin{pmatrix}1&m_1-1\\0&n_2\end{pmatrix}$,
  $\mathcal{L}_{1,2}=\begin{pmatrix}0&n_2\end{pmatrix}$,
  $\mathcal{L}_{2,1}=\begin{pmatrix}1&m_1-1\\0&m_0-1\end{pmatrix}$,
  $\mathcal{L}_{2,2}=\begin{pmatrix}0&m_0-1\end{pmatrix}$,
  \begin{eqnarray*}
    P_{1,1}&=&\left\{\!(m_1)\!\in\!\mathbb{R}\!\mid\! m_1\ge 1\right\}\cap P_1,\\
    P_{1,2}&=&\left\{\!(m_1)\!\in\!\mathbb{R}\!\mid\! m_1= 0\right\}\cap P_1,\\
    P_{2,1}&=&\left\{\!(m_0,m_1)\!\in\!\mathbb{R}^2\!\mid\! m_1\ge 1\right\}\cap P_2,\\
    P_{2,2}&=&\left\{\!(m_0,m_1)\!\in\!\mathbb{R}^2\!\mid\! m_1=0\right\}\cap P_2.
  \end{eqnarray*}
  In Step~4, fortunately, no further splitting is necessary, and we can even condense
  sub cases. For $\Gamma_1,\Gamma_2$ we have the weighted representation $[n_2+1;n_2+1-m_1,1]$
  and for $\Gamma_3,\Gamma_4$ we have the weighted representation $[m_0;m_0-m_1,1]$.
\end{proof}  

Using the parametric Barvinok algorithm or elementary summation formulas with case differentiation,
we conclude the well known fact that the number of $n$-agent weighted games with type composition $(\star)$
is $n$. Similarly, we conclude, that the number of $n$-agent weighted games with type composition
$(1,\star)$ is given by
$$
  \frac{n^3-n}{6}=\frac{n(n-1)(n+1)}{6}={{n+1} \choose 3}
$$
for all $n\in\mathbb{N}$.

%
{\small

}

\section*{\uppercase{Appendix}}

\subsection*{Non-weighted Examples}
In order to prove Lemma~\ref{lemma_not_all_weighted} it suffices to give a non-weighted example
for each case. Those can easily be found by classifying all complete simple games with up to $7$
agents. That $(2,4)$ is non-weighted has already been observed in Section~\ref{sec:weightedness}.
For the other type compositions mentioned in Lemma~\ref{lemma_not_all_weighted}, we provide
the following examples:
\begin{itemize}
  \item $\widehat{n}=(2,2,2)$, 
        $\mathcal{S}=\begin{pmatrix}
          2 & 0 & 0\\
          1 & 2 & 0\\
          0 & 2 & 2 
        \end{pmatrix}$,
        $\tilde{L}=\begin{pmatrix}
          1 & 1 & 1
        \end{pmatrix}$,
        $x=(1,0,1)$, $y=(2)$.
  \item $\widehat{n}=(1,1,5)$, 
        $\mathcal{S}=\begin{pmatrix}
          1 & 0 & 2\\
          0 & 1 & 3
        \end{pmatrix}$,
        $\tilde{L}=\begin{pmatrix}
          1 & 1 & 0 \\
          0 & 0 & 5
        \end{pmatrix}$,
        $x=(1,1)$, $y=(1,1)$.
  \item $\widehat{n}=(1,2,3)$, 
        $\mathcal{S}=\begin{pmatrix}
          1 & 1 & 0\\
          0 & 1 & 3
        \end{pmatrix}$,
        $\tilde{L}=\begin{pmatrix}
          1 & 0 & 2 \\
          0 & 2 & 1
        \end{pmatrix}$,
        $x=(1,1)$, $y=(1,1)$.
  \item $\widehat{n}=(1,3,2)$, 
        $\mathcal{S}=\begin{pmatrix}
          1 & 0 & 2\\
          0 & 3 & 0
        \end{pmatrix}$,
        $\tilde{L}=\begin{pmatrix}
          1 & 1 & 0 \\
          0 & 2 & 2
        \end{pmatrix}$,
        $x=(1,1)$, $y=(1,1)$.
  \item $\widehat{n}=(2,1,4)$, 
        $\mathcal{S}=\begin{pmatrix}
          1 & 0 & 2\\
          0 & 1 & 3
        \end{pmatrix}$,
        $\tilde{L}=\begin{pmatrix}
          2 & 0 & 0 \\
          0 & 0 & 4
        \end{pmatrix}$,
        $x=(2,0)$, $y=(1,1)$.
  \item $\widehat{n}=(2,4,1)$, 
        $\mathcal{S}=\begin{pmatrix}
          2 & 0 & 1\\
          0 & 4 & 0
        \end{pmatrix}$,
        $\tilde{L}=\begin{pmatrix}
          1 & 2 & 1
        \end{pmatrix}$,
        $x=(1,1)$, $y=(2)$.                                        
   \item $\widehat{n}=(1,1,1,3)$, 
        $\mathcal{S}=\begin{pmatrix}
          1 & 0 & 0 & 2\\
          0 & 1 & 1 & 1\\
          0 & 1 & 0 & 3
        \end{pmatrix}$,
        $\tilde{L}=\begin{pmatrix}
          1 & 1 & 0 & 0\\
          0 & 0 & 1 & 3
        \end{pmatrix}$,
        $x=(1,1,0)$, $y=(1,1)$.        
  \item $\widehat{n}=(1,1,3,1)$, 
        $\mathcal{S}=\begin{pmatrix}
          1 & 0 & 1 & 1\\
          0 & 1 & 2 & 0   
        \end{pmatrix}$,
        $\tilde{L}=\begin{pmatrix}
          1 & 1 & 0 & 0\\
          0 & 0 & 3 & 1  
        \end{pmatrix}$,
        $x=(1,1)$, $y=(1,1)$.                
  \item $\widehat{n}=(2,1,2,1)$, 
        $\mathcal{S}=\begin{pmatrix}
          1 & 1 & 0 & 1\\
          1 & 0 & 2 & 0
        \end{pmatrix}$,
        $\tilde{L}=\begin{pmatrix}
          2 & 0 & 0 & 0\\
          0 & 1 & 2 & 1
        \end{pmatrix}$,
        $x=(1,1)$, $y=(1,1)$.
   \item $\widehat{n}=(2,3,1,1)$, 
        $\mathcal{S}=\begin{pmatrix}
          1 & 1 & 0 & 1\\
          0 & 3 & 1 & 0   
        \end{pmatrix}$,
        $\tilde{L}=\begin{pmatrix}
          2 & 0 & 0 & 0\\
          1 & 0 & 1 & 1\\
          0 & 3 & 0 & 1  
        \end{pmatrix}$. The vectors
        $x=(3)$, $y=(1,1,1)$ yield $(3,3,0,3)\preceq(3,3,1,2)$,
        which is generally also sufficient to conclude non-weightedness, since
        the coalition vectors can be shifted accordingly. Here we may consider
        two times the SMW vector $(1,1,0,1)$ and one times the shifted winning
         vector $(1,0,1,1)$ to obtain $(3,3,1,2)$.  
   \item $\widehat{n}=(1,2,2,1)$, 
        $\mathcal{S}=\begin{pmatrix}
          1 & 0 & 1 & 1\\
          0 & 2 & 1 & 0   
        \end{pmatrix}$,
        $\tilde{L}=\begin{pmatrix}
          1 & 1 & 0 & 0\\
          0 & 1 & 2 & 1  
        \end{pmatrix}$,
        $x=(1,1)$, $y=(1,1)$.
  \item $\widehat{n}=(1,2,1,2)$, 
        $\mathcal{S}=\begin{pmatrix}
          1 & 0 & 0 & 2\\
          0 & 2 & 1 & 0\\
          0 & 2 & 0 & 2   
        \end{pmatrix}$,
        $\tilde{L}=\begin{pmatrix}
          1 & 1 & 0 & 0\\
          0 & 1 & 1 & 2  
        \end{pmatrix}$,
        $x=(1,1,0)$, $y=(1,1)$.
  \item $\widehat{n}=(1,1,2,2)$, 
        $\mathcal{S}=\begin{pmatrix}
          1 & 0 & 0 & 2\\
          0 & 1 & 1 & 1   
        \end{pmatrix}$,
        $\tilde{L}=\begin{pmatrix}
          1 & 1 & 0 & 0\\
          0 & 1 & 0 & 2\\
          0 & 0 & 2 & 2  
        \end{pmatrix}$,
        $x=(1,2)$, $y=(1,1,1)$.
  \item $\widehat{n}=(1, 2, 1, 1, 1)$, 
        $\mathcal{S}=\begin{pmatrix}
          1 & 1 & 0 & 0 & 0\\
          1 & 0 & 1 & 1 & 0\\
          0 & 2 & 1 & 0 & 0\\
          0 & 1 & 1 & 1 & 1       
        \end{pmatrix}$,
        $\tilde{L}=\begin{pmatrix}
          1 & 0 & 1 & 0 & 1\\
          0 & 2 & 0 & 1 & 0)    
        \end{pmatrix}$,
        $x=(1,0,0,1)$, $y=(1,1)$.        
  \item $\widehat{n}=(1, 1, 2, 1, 1)$, 
        $\mathcal{S}=\begin{pmatrix}
          1 & 1 & 0 & 0 & 0\\
          1 & 0 & 1 & 1 & 0\\
          0 & 1 & 2 & 0 & 0\\
          0 & 0 & 2 & 1 & 1     
        \end{pmatrix}$,
        $\tilde{L}=\begin{pmatrix}
          1 & 0 & 1 & 0 & 1\\
          0 & 1 & 1 & 1 & 0
        \end{pmatrix}$,
        $x=(1,0,0,1)$, $y=(1,1)$.
  \item $\widehat{n}=(1, 1, 1, 2, 1)$, 
        $\mathcal{S}=\begin{pmatrix}
          1 & 1 & 0 & 0 & 0\\
          1 & 0 & 1 & 0 & 1\\
          1 & 0 & 0 & 2 & 0\\
          0 & 0 & 1 & 2 & 1             
        \end{pmatrix}$,
        $\tilde{L}=\begin{pmatrix}
          0 & 1 & 1 & 1 & 0\\
          1 & 0 & 0 & 1 & 1  
        \end{pmatrix}$,
        $x=(1,0,0,1)$, $y=(1,1)$.        
  \item $\widehat{n}=(1, 1, 1, 1, 2)$, 
        $\mathcal{S}=\begin{pmatrix}
          1 & 1 & 0 & 0 & 0\\
          1 & 0 & 1 & 0 & 1\\
          0 & 1 & 1 & 1 & 0\\
          0 & 0 & 1 & 1 & 2             
        \end{pmatrix}$,
        $\tilde{L}=\begin{pmatrix}
          1 & 0 & 0 & 1 & 1\\
          0 & 1 & 1 & 0 & 1    
        \end{pmatrix}$,
        $x=(1,0,0,1)$, $y=(1,1)$.      
\end{itemize}  

\subsection*{Proofs of some Special Cases of Conjecture~\ref{main_conj}}
For $n\le 5$ agents, all complete simple games are weighted, so that all type compositions
$\widehat{n}$ with $\Vert\widehat{n}\Vert_1\le 5$ are weighted, i.e., $(1,2,2)$ and $(1,1,2,1)$
are weighted. For $n=6$ agents, we remark that exactly $1111$ of the $1171$ complete simple games
are weighted, i.e., $60$ games are non-weighted. For $n\ge 7$ agents, the discrepancy quickly
increases. For $n=7$ agents there are $44313$ complete simple games, while only $29373$ of them
are weighted. For $n=8$ agents, the respective counts are given by $16175188$ and $2730164$. Via
exhaustive enumeration one can easily show that $(1,1,4)$ is weighted. The cases $(\star)$ and
$(\star,1)$ have already been proven in Subsection~\ref{subsec:example_for_algorithm}.

\begin{lemma}
  \label{lemma_star_1}
  $\omega=(\star,1)$ is weighted.
\end{lemma}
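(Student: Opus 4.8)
The plan is to follow the four-step algorithm of Section~\ref{sec:weighted_composition_types} specialized to $\omega=(\star,1)$, exactly as was done for $(\star)$ and $(1,\star)$. Here the star is in the first coordinate, so $i=1$, which means the index set $\alpha$ is empty (there are no coordinates before position~$i$) and $\tau(\omega)=1$, giving a single parameter $m_0$ attached to the starred coordinate. The tail set is $\beta=\{(b_2)\mid 0\le b_2\le 1\}$ and $\Lambda=n_2=1$, so by Lemma~\ref{lemma_shift_minimal_winning_candidates} the candidate set is $C(\omega)=\{(m_0,0),(m_0-1,1)\}$, a set of size two. This is the payoff of Step~1: every SMW vector of a game with type composition $(\star,1)$ must be one of these two parametric rows.

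In Step~2 I would enumerate the nonempty subsets of $C(\omega)$ and keep only those satisfying the conditions of Theorem~\ref{thm_characterization_cs}(a). The subset $\{(m_0-1,1)\}$ alone should be ruled out by condition~(a)(iii), which (for $t=2$) forces some row with positive first entry and second entry strictly below $n_2=1$; with $n_2=1$ this demands a row whose second coordinate is $0$, so a valid matrix must contain the row $(m_0,0)$. I therefore expect two surviving pairs: $\mathcal{S}_1=(m_0,0)$ by itself, and $\mathcal{S}_2$ consisting of both rows $(m_0,0)$ and $(m_0-1,1)$, each accompanied by the polytope in $m_0$ cut out by conditions~(i), (ii) (incomparability $\bowtie$ between the two rows), and~(iv). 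The incomparability constraint is what keeps the two-row case nontrivial: $(m_0,0)\bowtie(m_0-1,1)$ requires the prefix sums to cross, which holds automatically here since $m_0>m_0-1$ but $m_0<m_0+1=(m_0-1)+1$.

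In Steps~3 and~4 I would, for each pair $(\mathcal{S}_h,P_h)$, write down the candidate losing vectors — the shifts of the SMW rows together with any elements of the candidate-losing set incomparable to all SMW vectors — split each polytope into the finitely many subcases where the losing set is fixed, and then exhibit an explicit parametric weighting $[q;w_1,w_2]$ valid on each subcase. By analogy with the $(1,\star)$ proof I expect weights of the form $q=m_0$, $w_1=1$, with $w_2$ a small integer (likely $0$ or $1$) chosen so that the single type-2 agent tips exactly the borderline coalitions; verifying $\bigcup_k P_{h,k}=P_h$ then closes the case. The main obstacle I anticipate is purely bookkeeping rather than conceptual: correctly generating the shifted losing vectors and confirming that the proposed weighting separates winning from losing coalitions on the \emph{entire} parameter polytope, including the boundary allocations of $m_0$, so that no residual subcase escapes the exhibited weight vector.
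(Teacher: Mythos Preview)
Your overall four-step strategy is exactly the one the paper uses, but two concrete slips derail the execution.

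First, your candidate set $C(\omega)$ is too small. In Lemma~\ref{lemma_shift_minimal_winning_candidates} the offset $c\in\{0,\dots,\Lambda\}$ and the tail $b\in\beta$ range \emph{independently}, so for $\omega=(\star,1)$ with $\Lambda=1$ and $\beta=\{(0),(1)\}$ one obtains
\[
C(\omega)=\{(m_0,0),\,(m_0,1),\,(m_0-1,0),\,(m_0-1,1)\},
\]
four elements rather than two. The paper then remarks that $(m_0-1,0)$ and $(m_0-1,1)$ may be absorbed by reparameterising $m_0$, so only $(m_0,0)$ and $(m_0,1)$ need separate discussion.

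Second, and more damaging, your incomparability check contains an arithmetic error: $(m_0-1)+1=m_0$, not $m_0+1$. The prefix-sum vectors of $(m_0,0)$ and $(m_0-1,1)$ are $(m_0,m_0)$ and $(m_0-1,m_0)$ respectively, whence $(m_0,0)\succeq(m_0-1,1)$; the two rows are comparable, not $\bowtie$-related. In fact \emph{no} two elements of $C(\omega)$ are incomparable here, so $r=1$ is forced and your anticipated two-row matrix $\mathcal{S}_2$ simply does not arise. Condition~(a)(iii) of Theorem~\ref{thm_characterization_cs} then eliminates the single-row candidate $(m_0,1)$, leaving only $\mathcal{S}_1=(m_0,0)$ with $1\le m_0\le n_1$; the paper finishes immediately with the weighting $[m_0;1,0]$. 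Once the arithmetic is corrected your argument collapses to precisely this single case.
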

\begin{proof}
  According to Step~1 we have $\tau(\omega)=1$ parameter $m_0$, $\Lambda=1$, and $C(\omega)=\{(m_0,0),(m_0,1),
  (m_0-1,0),(m_0-1,1)\}$ with $|C(\omega)|=4$. For Step~2 we remark that no pair of elements of $C(\omega)$
  is incomparable, so that we have $r=1$. There is no need to consider the cases $\mathcal{S}=(m_0-1,0)$
  or $\mathcal{S}=(m_0-1,1)$ separately, since they are captured by the two cases $\mathcal{S}=(m_0,0)$
  and $\mathcal{S}=(m_0,1)$. Generally, we can require that for each used parameter $m_j$ there is a SMW
  vector taking the value $m_j-0$ at the respective coordinate. Due to condition (a)(iii) of 
  Theorem~\ref{thm_characterization_cs}, $\mathcal{S}=(m_0,1)$ is not possible. Thus, there
  remains only one case with non-empty polytope for the parameter:
  $$
    \mathcal{S}_1=\begin{pmatrix}m_0 & 0\end{pmatrix}, 
    P_1=\left\{(m_0)\in\mathbb{R}\mid 1\le m_0\le n_1\right\}.
  $$  
  In Step~3 there remains a single case: $\Gamma_1=(\mathcal{S}_1,\mathcal{L}_1,P_1)$
  with $\mathcal{L}=\begin{pmatrix}m_0-1&0\end{pmatrix}$.
  In Step~4 we determine the weighted representation $[m_0;1,0]$ for $\Gamma_1$.
\end{proof}

\begin{lemma}
  $\omega=(\star,2)$ is weighted.
\end{lemma}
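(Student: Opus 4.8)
\noindent
The plan is to run the four-step algorithm of Section~\ref{sec:weighted_composition_types} on the $1\star$ family $\omega=(\star,2)$, where $i=1$, $t=2$, and $n_2=2$, and then to produce a single weighted representation valid on each resulting sub case. In Step~1, since $i=1$ the set $\alpha$ is the trivial (empty-tuple) set, so $\tau(\omega)=1$ and there is exactly one parameter $m_0$. Here $\Lambda=n_2=2$ and $\beta=\{(0),(1),(2)\}$, so the candidate set is $C(\omega)=\{(m_0-c,b)\mid c,b\in\{0,1,2\}\}$ with $|C(\omega)|=9$.

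The crux is Step~2, the enumeration of admissible SMW matrices. I would first note that for two candidates $(m_0-c,b)$ and $(m_0-c',b')$, writing $p=b-c$ and $p'=b'-c'$, one has $(m_0-c,b)\preceq(m_0-c',b')$ exactly when $c\ge c'$ and $p\le p'$. Consequently candidates with equal $c$ are always comparable, and two candidates are incomparable precisely when the one with the larger $c$ also has the strictly larger $p$. Hence an antichain in $\bowtie$ consists of candidates with pairwise distinct $c$-values along which $p$ strictly increases; since $b,c\in\{0,1,2\}$ a short check shows no size-three antichain exists, so every matrix $\mathcal{S}$ has at most two rows. Using the reparametrization $m_0\mapsto m_0+1$ (which only shifts the feasibility polytope of $m_0$) I may assume some row has $c=0$, and condition~(a)(iii) of Theorem~\ref{thm_characterization_cs} discards the lone row $(m_0,2)$, whose agents would all be symmetric and thus form a single type. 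This should leave exactly the three cases $\mathcal{S}_1=\begin{pmatrix}m_0&0\end{pmatrix}$, $\mathcal{S}_2=\begin{pmatrix}m_0&0\\m_0-1&2\end{pmatrix}$, and $\mathcal{S}_3=\begin{pmatrix}m_0&1\end{pmatrix}$, each with $1\le m_0\le n_1$.

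In Steps~3 and~4 I would read off the losing vectors and then verify integral weights by direct substitution. The relevant SML vectors are $(m_0-1,2)$ for $\mathcal{S}_1$; $(m_0-1,1)$ and $(m_0-2,2)$ for $\mathcal{S}_2$; and $(m_0,0)$ together with $(m_0-1,2)$ for $\mathcal{S}_3$. I expect the representations $[m_0;1,0]$, $[2m_0;2,1]$, and $[2m_0+1;2,1]$ to work, respectively. Each verification reduces to a handful of inequalities: e.g.\ for $\mathcal{S}_2$ the SMW vectors evaluate to $2m_0\ge 2m_0$ while the critical losing vector $(m_0-1,1)$ evaluates to $2m_0-1<2m_0$; and for $\mathcal{S}_3$ the SMW vector gives $2m_0+1\ge 2m_0+1$ while $(m_0,0)$ and $(m_0-1,2)$ both give $2m_0<2m_0+1$.

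The main obstacle is establishing completeness in Step~2, namely proving that the three matrices really exhaust all complete simple games in the family and that condition~(iii) has been applied correctly to remove the degenerate unanimity candidate. Step~4 is painless here only because a single weight vector happens to separate winning from losing vectors across the entire polytope of each sub case, so no further splitting nor the test $\cup_k P_{h,k}=P_h$ is required. To be safe I would still check the boundary value $m_0=1$ for $\mathcal{S}_2$, where the second SML row $(m_0-2,2)$ becomes infeasible, and confirm the weighting remains correct there.
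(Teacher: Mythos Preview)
Your proof is correct and follows essentially the same route as the paper: the same parameter count, the same nine-element candidate set $C(\omega)$, the same three admissible matrices $\mathcal{S}$ (up to relabeling your $\mathcal{S}_2,\mathcal{S}_3$ are the paper's $\mathcal{S}_3,\mathcal{S}_2$), and the identical weighted representations $[m_0;1,0]$, $[2m_0;2,1]$, $[2m_0+1;2,1]$. Your antichain argument via the pair $(c,p)$ is a clean way to justify $r\le 2$, which the paper leaves more implicit. One cosmetic slip: for your $\mathcal{S}_2$ the vector $(m_0-2,2)$ is losing but not shift-maximal losing, since $(m_0-1,1)\succ(m_0-2,2)$ and $(m_0-1,1)$ is itself losing; the paper accordingly lists only $\mathcal{L}_3=(m_0-1,1)$ there, but your extra check is harmless.
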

\begin{proof}
  According to Step~1 we have $\tau(\omega)=1$ parameter $m_0$, $\Lambda=2$, and $C(\omega)=\{(m_0,0),(m_0,1),
  (m_0,2),(m_0-1,0),(m_0-1,1),(m_0-1,2),(m_0-2,0),(m_0-2,1)$, $(m_0-2,2)\}$ with $|C(\omega)|=9$. As in the proof
  of Lemma~\ref{lemma_star_1}, we remark that we can assume that one of the vectors $(m_0,0)$, $(m_0,1)$, or
  $(m_0,2)$ has to be chosen as a SMW vector. Only the first one is incomparable with one of the other
  vectors of $C(\omega)$. $\mathcal{S}=\begin{pmatrix}m_0 &2\end{pmatrix}$ is impossible due to condition
  (a)(iii) of Theorem~\ref{thm_characterization_cs}. Thus Step~2 yields the cases
  \begin{itemize}
    \item $\mathcal{S}_1=\begin{pmatrix}m_0&0\end{pmatrix}$,
          $P_1=\left\{(m_0)\in\mathbb{R}\mid 1\le m_0\le n_1\right\}$
    \item $\mathcal{S}_2=\begin{pmatrix}m_0&1\end{pmatrix}$,
          $P_2=\left\{(m_0)\in\mathbb{R}\mid 1\le m_0\le n_1\right\}$ 
    \item $\mathcal{S}_3=\begin{pmatrix}m_0&0\\m_0-1&2\end{pmatrix}$,
          $P_3=P_2=P_1$
  \end{itemize}  
  No splitting or modification of the $P_i$ is necessary in Step~3, so that
  $\Gamma_i=(\mathcal{S}_i,\mathcal{L}_i,P_i)$, where $\mathcal{L}_1=\begin{pmatrix}m_0-1&2\end{pmatrix}$,
  $\mathcal{L}_2=\begin{pmatrix}m_0&0\\m_0-1&2\end{pmatrix}$, and $\mathcal{L}_3=\begin{pmatrix}m_0-1&1\end{pmatrix}$
  for all $1\le i\le 3$. In Step~4 we determine the weighted representations $[m_0;1,0]$ for $\Gamma_1$,
  $[2m_0+1;2,1]$ for $\Gamma_2$, and $[2m_0;2,1]$ for $\Gamma_3$.
\end{proof}

\begin{lemma}
  $\omega=(\star,3)$ is weighted.
\end{lemma}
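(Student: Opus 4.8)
The plan is to run the four-step algorithm of Section~\ref{sec:weighted_composition_types} exactly as in the lemmas for $(\star,1)$ and $(\star,2)$; the only new feature is that the second type is now of size $n_2=3$, which inflates every intermediate list. In Step~1 we have $i=1$, $t=2$, hence $\alpha=\{()\}$, a single parameter $m_0$ with $\tau(\omega)=1$, and $\Lambda=n_2=3$ with $\beta=\{(0),(1),(2),(3)\}$, so that
$$C(\omega)=\left\{(m_0-c,d)\mid 0\le c\le 3,\ 0\le d\le 3\right\},\qquad |C(\omega)|=16.$$

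For Step~2 I would first record the incomparability pattern on $C(\omega)$: for $c<c'$ the vectors $(m_0-c,d)$ and $(m_0-c',d')$ satisfy $(m_0-c,d)\bowtie(m_0-c',d')$ precisely when $d'-d>c'-c$. As in the proof for $(\star,2)$, some row must attain the value $m_0$ in its first coordinate, and $(m_0,3)$ is forbidden by condition~(a)(iii) of Theorem~\ref{thm_characterization_cs}; so the anchor row lies in $\{(m_0,0),(m_0,1),(m_0,2)\}$. Reading off the pattern above, $(m_0,2)$ is incomparable to nothing, $(m_0,1)$ only to $(m_0-1,3)$, and $(m_0,0)$ only to $(m_0-1,2),(m_0-1,3),(m_0-2,3)$, which are moreover pairwise comparable. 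Hence exactly seven antichains survive as candidate matrices $\mathcal{S}$: the three singletons $\{(m_0,0)\}$, $\{(m_0,1)\}$, $\{(m_0,2)\}$, the three pairs $\{(m_0,0),(m_0-1,2)\}$, $\{(m_0,0),(m_0-1,3)\}$, $\{(m_0,0),(m_0-2,3)\}$, and the pair $\{(m_0,1),(m_0-1,3)\}$. For each I would attach its polytope in $m_0$ (the generic constraint $1\le m_0\le n_1$, tightened to $m_0\ge 2$ whenever the row $(m_0-2,3)$ occurs), after checking (a)(iii) and fixing the row order by (a)(iv).

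Steps~3 and~4 I would then carry out family by family. In Step~3, for each pair $(\mathcal{S}_h,P_h)$ I collect the SML candidates --- the elements of $C(\omega)$ incomparable to all SMW rows together with the relevant unit shifts of the SMW rows --- and split into sub-sub-cases $(\mathcal{S}_h,\mathcal{L}_h,P_{h,k})$ exactly where a shifted losing vector would leave the box $0\le d\le 3$, mirroring the $m_1=0$ split seen for $(1,\star)$. In Step~4 I would exhibit a parametric quota vector $[q;w_1,w_2]$ linear in $m_0$ and check that it separates the recorded winning rows from the losing ones over the whole sub-polytope; the natural guesses, extending those for $(\star,2)$, are $[m_0;1,0]$ for the $(m_0,0)$-chain, $[3m_0+1;3,1]$ and $[3m_0+2;3,1]$ for the cases anchored at $(m_0,1)$ and $(m_0,2)$, and $[2m_0;2,1]$-type representations for the pairs.

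The main obstacle is the combination of Step~2 bookkeeping with the uniformity demanded in Step~4: because $n_2=3$ yields seven families instead of three, and because a pair such as $\{(m_0,0),(m_0-2,3)\}$ constrains quota and weights more tightly, the delicate point is to guess a single weighting valid on an entire $P_{h,k}$ and to verify that the successful polytopes actually exhaust each $P_h$, i.e.\ $\bigcup_k P_{h,k}=P_h$, so that no integral value of $m_0$ escapes as non-weighted. Should any family genuinely require a case distinction in $m_0$, this is where it would surface; by analogy with $(\star,1)$ and $(\star,2)$ I expect each of the seven families to admit one uniform representation with no further splitting.
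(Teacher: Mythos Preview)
Your plan is exactly the paper's route, and your Step~1 and Step~2 bookkeeping---sixteen candidates, the anchor in $\{(m_0,0),(m_0,1),(m_0,2)\}$, the seven antichains, and the extra constraint $m_0\ge 2$ when $(m_0-2,3)$ is present---matches the paper line for line. Two corrections are worth recording. First, the guess $[3m_0+2;3,1]$ for $\mathcal{S}_3=(m_0,2)$ does not work uniformly: when $m_0<n_1$ the vector $(m_0+1,0)$ is shift-maximal losing but would receive weight $3(m_0+1)=3m_0+3\ge q$. The paper takes $w_2=2$ here and uses $[3m_0+4;3,2]$; the remaining families get $[3m_0;3,1]$, $[2m_0+1;2,1]$, $[2m_0;2,1]$, and $[3m_0;3,2]$. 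Second, contrary to your closing expectation, two of the seven families \emph{do} split in Step~3: $\mathcal{S}_3$ according to whether $m_0=n_1$ (governing whether the losing vector $(m_0+1,0)$ exists), and $\mathcal{S}_4=\{(m_0,0),(m_0-1,2)\}$ according to whether $m_0=1$ (governing whether $(m_0-2,3)$ exists). In Step~4, however, each pair of sub-cases is covered by a single weighting, so your expectation is ultimately vindicated at that level.
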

\begin{proof}
  According to Step~1, we have $\tau(\omega)=1$ parameter $m_0$, $\Lambda=3$, so that $|C(\omega)|=16$. As in
  the two previous proofs, we remark that we can assume that one of the vectors $(m_0,0)$, $(m_0,1)$, $(m_0,2)$,
  or $(m_0,3)$ has to be chosen as a SMW vector. Here we can check that at most $r=2$ elements of $C(\omega)$ can
  be pairwise incomparable. Excluding $\mathcal{S}=\begin{pmatrix}m_0 & 3\end{pmatrix}$ with condition (a)(iii)
  of Theorem~\ref{thm_characterization_cs}, there remain the following possibilities for $r=1$:
  \begin{itemize}
    \item $\mathcal{S}_1=\begin{pmatrix}m_0&0\end{pmatrix}$,
          $P_1=\left\{(m_0)\in\mathbb{R}\mid 1\le m_0\le n_1\right\}$
    \item $\mathcal{S}_2=\begin{pmatrix}m_0&1\end{pmatrix}$,
          $P_2=P_1$ 
    \item $\mathcal{S}_3=\begin{pmatrix}m_0&2\end{pmatrix}$,
          $P_3=P_1$
  \end{itemize}
  For $r=2$ we additionally obtain:
  \begin{itemize}
    \item $\mathcal{S}_4=\begin{pmatrix}m_0&0\\m_0-1&2\end{pmatrix}$,
          $P_4=P_1$
    \item $\mathcal{S}_5=\begin{pmatrix}m_0&0\\m_0-1&3\end{pmatrix}$,
          $P_5=P_1$
    \item $\mathcal{S}_6=\begin{pmatrix}m_0&1\\m_0-1&3\end{pmatrix}$,
          $P_6=P_1$
    \item $\mathcal{S}_7=\begin{pmatrix}m_0&0\\m_0-2&3\end{pmatrix}$,
          $P_7=[2,n_1]$                    
  \end{itemize}
  For $i\in\{1,2,5,6,7\}$ no splitting of the $P_i$ is necessary in Step~3, so that
  $\Gamma_i=(\mathcal{S}_i,\mathcal{L}_i,P_i)$, where
  $\mathcal{L}_1=\begin{pmatrix}m_0-1 & 3\end{pmatrix}$, 
  $\mathcal{L}_2=\begin{pmatrix}m_0&0\\m_0-1&3\end{pmatrix}$,
  $\mathcal{L}_5=\begin{pmatrix}m_0-1&2\end{pmatrix}$,
  $\mathcal{L}_6=\begin{pmatrix}m_0&0\\m_0-1&2\end{pmatrix}$,
  $\mathcal{L}_7=\begin{pmatrix}m_0-1&1\end{pmatrix}$. For the remaining
  indices $i\in\{3,4\}$ cases split up as $\Gamma_{i,j}=(\mathcal{S}_i,\mathcal{L}_{i,j},P_{i,j})$,
  where
  \begin{itemize}
    \item $\mathcal{L}_{3,1}=\begin{pmatrix}m_0&1\\m_0-1&3\end{pmatrix}$, $P_{3,1}=\{n_1\}$
    \item $\mathcal{L}_{3,2}=\begin{pmatrix}m_0+1&0\\m_0-1&3\end{pmatrix}$, $P_{3,2}=[1,n_1-1]$
    \item $\mathcal{L}_{4,1}=\begin{pmatrix}m_0-1&1\end{pmatrix}$, $P_{3,1}=\{1\}$
    \item $\mathcal{L}_{4,2}=\begin{pmatrix}m_0-1&1\\m_0-2&3\end{pmatrix}$, $P_{3,2}=[2,n_1]$
  \end{itemize}
  In Step~4 we obtain the weighted representations
  \begin{itemize}
    \item $[m_0;1,0]$ for $\Gamma_1$
    \item $[3m_0+1;3,1]$ for $\Gamma_2$
    \item $[3m_0+4;3,2]$ for $\Gamma_{3,1}$ and $\Gamma_{3,2}$ 
    \item $[2m_0;2,1]$ for $\Gamma_{4,1}$ and $\Gamma_{4,2}$
    \item $[3m_0;3,1]$ for $\Gamma_5$
    \item $[2m_0+1;2,1]$ for $\Gamma_6$
    \item $[3m_0;3,2]$ for $\Gamma_7$
  \end{itemize}  
\end{proof}

Thus, Conjecture~\ref{main_conj} and Corollary~\ref{main_cor} are proven for all type compositions
with $t$ types, where either $t<3$ or $t>5$. For $t\in\{3,4,5\}$, the necessary case differentiations
become more and more complex so that we aim for a computer aided proof. 

\subsection*{Open Problems}
Of course, Conjecture~\ref{main_conj} has to be proven in the first run. Several ways to extend these
considerations are imaginable. One can ask for a similar classification for roughly-weighted games,
i.e., for games where the weight of a winning coalition can equal the weight of another losing coalition,
or for complete simple games of small dimension, i.e., games which can be represented as the intersection
of a small number of weighted games. A possibly more challenging open problem is to provide explicit
enumeration results for other sub classes of weighted games, than the ones known in the literature or
those that can be concluded from the line of consideration presented in this paper.

\end{document}